\newtheorem{theorem}{Theorem}[section]
\newtheorem{corollary}[theorem]{Corollary}
\newtheorem{proposition}[theorem]{Proposition}
\newtheorem{assumption}[theorem]{Assumption}
\theoremstyle{definition}
\newtheorem{definition}[theorem]{Definition}
\theoremstyle{remark}
\newtheorem{remark}[theorem]{Remark}
\numberwithin{equation}{section}
\def\a{\alpha}
\def\l{\lambda}
\def\t{\tau}
\def\G{\Gamma}
\def\O{\Omega}
\def\e{\epsilon}
\def\p{\phi}
\newcommand{\reals}{\mathbb{R}}
\newcommand{\naturals}{\mathbb{N}}
\newcommand{\innerprod}[1]{\left\langle#1\right\rangle}
\newcommand{\norm}[1]{\left\|#1\right\|}
\newcommand{\abs}[1]{\left|#1\right|}
\newcommand{\ds}{\displaystyle}
\author[Y. Guo]{Yanqiu Guo}
\address{Department of Computer Science and Applied Mathematics \\ Weizmann Institute of Science\\
Rehovot 76100, Israel} \email{yanqiu.guo@weizmann.ac.il}
\author[M. A. Rammaha]{Mohammad A. Rammaha}
\address{Department of
Mathematics \\ University of Nebraska-Lincoln \\
Lincoln, NE  68588-0130, USA} \email{rammaha@math.unl.edu}
\author[S. Sakuntasathien]{Sawanya Sakuntasathien}
\address{ Department of Mathematics \\ Faculty of Science \\ Silpakorn University  \\
Nakhonpathom, 73000, Thailand} \email{sawanya.s@su.ac.th}
\author[E. S. Titi]{Edriss S. Titi}
\address{Department of Mathematics and Department of Mechanical and Aerospace Engineering\\
University of California, Irvine, California 92697-3875, USA and Department of Computer
Science and Applied Mathematics, Weizmann Institute of Science, Rehovot 76100, Israel} \email{etiti@math.uci.edu and edriss.titi@weizmann.ac.il}
\author[D. Toundykov]{Daniel Toundykov}
\address{Department of
Mathematics \\ University of Nebraska-Lincoln \\
Lincoln, NE  68588-0130, USA} \email{dtoundykov@unl.edu}
\title[Hyperbolic equations of viscoelasticity]
{Hadamard well-posedness for a hyperbolic equation of viscoelasticity with supercritical sources and damping}
\date{April 14, 2014. Appeared in: Journal of Differential Equations 257 (2014), 3778--3812. }
\keywords{viscoelastic, memory, integro-differential, damping, source, monotone operators, nonlinear semigroup}
\subjclass[2010]{35L10, 35L20, 35L70}
\begin{document}
\maketitle

\begin{abstract}
Presented here is a study of a viscoelastic wave equation with supercritical source and damping terms. We employ the theory of monotone operators and nonlinear semigroups, combined with energy methods to establish the existence of a unique local weak solution. In addition, it is shown that the solution depends continuously on the initial data and is global provided the damping dominates the source in an appropriate sense.
\end{abstract}

\section {Introduction }\label{S1}

\subsection{Literature Overview}
The theory of viscoelasticity encompasses description of materials that exhibit a combination of elastic (able to recover the original shape after stress application) and viscous (deformation-preserving after stress removal) characteristics. Quantitative description of such substances involves a  strain-stress relation that depends on time. The classical linearized model yields an integro-differential equation that augments the associated elastic stress tensor with an appropriate \emph{memory term} which encodes the history of the deformation gradient.  The foundations of the theory go back to pioneering works of Boltzmann \cite{Boltz1}. For fundamental modeling developments see \cite{Col1} and the references therein.

When considering propagation of sound waves in viscoelastic fluids, if we neglect shear stresses then the stress tensor field may be expressed in terms of the acoustic pressure alone \cite{PZT}. Thereby we obtain a scalar wave equation  with a memory integral. This simplified formulation in fact captures most of the critical difficulties associated with the
well-posedness of the viscoelastic vectorial model \cite{Col1, Pata}, and therefore the multi-dimensional scalar wave equation with memory will be the subject of the subsequent discussion.

One can consider such an integro-differential equation with a finite or infinite time delay (the former being a special case of the infinite-delay where the strain is zero for all $t<0$). When restricted to the finite memory setting the system does not generate a semigroup, whereas the infinite-delay model can be represented as a semigroup evolution with the help of an appropriately defined history variable.

The (linear) viscoelastic problem with infinite memory  and its stability were extensively addressed in  \cite{Daf2,Daf1,Fab1}.  Existence of global attractors for wave equations with infinite memory in the presence of  nonlinear sources and linear internal damping (velocity feedback) was investigated in \cite{Pata}. The ``source"  here refers to amplitude-dependent feedback nonlinearity  whose growth rate is polynomially bounded with exponent $p\geq 1$. Due to the regularity of finite-energy solutions for this problem--- $H^1$ Sobolev level for the displacement variable---the source considered in the latter reference was \emph{subcritical} ($p< n/(n-2)$ for dimensions $n>2$) with respect to this energy.
Subsequently in \cite{PPZ} the authors look at attractors for the problem with strong (Kelvin-Voigt) damping and higher-order sources, including exponents of maximal order for which the associated energy is defined ($p=5$ in 3D).

A larger body of work is available on the finite-time delay problem.    The papers in this  list focus predominantly on well-posedness and asymptotic stability with energy dissipation due to memory and interior and/or boundary velocity feedbacks.  In addition, the sources, if present  are at most critical, i.e, $p\leq n/(n-2)$ in dimensions above $2$. See \cite{Cav3} for a treatment of interior and boundary memory  with nonlinear boundary damping and no sources.  Energy decay was investigated under localized interior dissipation and a source term was addressed in \cite{Cav4,Cav5}. Local and global well-posedness with source, but now without additional frictional damping was the subject of the paper \cite{BMess}. For systems of coupled waves with memory see
\cite{Han1}. Recent blow-up results for viscoelastic wave equations can be found in \cite{Han2,Liu1}. For quasilinear viscoelastic models with no sources and Kelvin-Voight damping refer for example to \cite{CC,Mess1}.

Overall, it appears that the finite-time memory case has been more actively studied. Yet to our knowledge presently there are no works dealing with super-critical source exponents (i.e.,  $p >3$  in 3D) in combination with memory terms. In light of this trend the present goal of this paper is two-fold:
\begin{itemize}
\item  Analyze the viscoelastic wave equation with sources beyond the critical level---so the potential energy is  no longer defined,---for instance, when $p>5$ in 3-space dimensions.  Our study is inspired by the advances in  \cite{B1,BL3,BL2, BL1}  and the consequent developments in \cite{BRT,GR1,GR2,GR,RW}.

\item Provide a treatment of this problem in the context of evolution semigroup framework with a self-contained detailed description of the generator and function spaces.

\end{itemize}

\subsection{The model}

Throughout, $\O\subset \reals^3$ is a bounded domain (open and connected) with boundary $\G$ of class $C^2$. Our results extend easily to bounded domains in $\reals^n$, by accounting for the corresponding Sobolev embeddings, and accordingly adjusting the conditions imposed on the parameters. In this paper, we study the following model:
\begin{align} \label{1.1}
\begin{cases}
u_{tt}- k(0) \Delta u -  \int_0^{\infty} k'(s) \Delta u(t-s) ds + g(u_t)=f(u),  \quad \text{ in } \O \times (0,\infty), \\
u(x,t)=0, \quad \text{ on }  \Gamma \times \reals, \\
u(x,t)=u_0(x,t), \quad \text{ in } \O \times  (-\infty,0],
\end{cases}
\end{align}
where, as mentioned earlier,  the unknown $u$ is an $\reals$-valued function on $\Omega\times (0,\infty)$, which can be thought of as the acoustic pressure  of sound waves in viscoelastic fluids.
The differentiable scalar map $k$ satisfies: $k(0), k(\infty) >0$ with $k'(s)< 0$ for all $s>0$. Here, $g$ is a monotone feedback, and $f (u)$  is a source. The memory integral
\[
\int_0^{\infty} k'(s) \Delta u(t-s) ds
\]  quantifies
the viscous resistance and provides a weak form of energy dissipation.  It also accounts for the full past history as time goes to $-\infty$, as opposed to the finite-memory model where the history is taken only over the interval $[0,t]$.

A similar model to (\ref{1.1}) was studied in \cite{Pata}, but with a linear interior damping and a source of a {\it dissipative sign} which is at most sub-critical. In our model (\ref{1.1}), the power-type damping $g(u_t)$ is nonlinear and not under any growth restrictions at the origin or at infinity; while the \emph{energy building} source $f(u)$ is possibly of \emph{supercritical} order.

For simplicity,  we set $\mu(s)=-k'(s)$ and $k(\infty)=1$, and so $k(0)>1$. Thus,   $\mu: \reals^+\longrightarrow \reals^+$,  where in Assumption \ref{ass} below precise assumptions on $\mu$ will be imposed.

\subsection{Notation}
The following notations will be used throughout the paper:
\begin{align}\label{1.2}
\norm{u}_s=\norm{u}_{L^s(\O)}; \;\;\;\;\langle u,v \rangle=\langle u,v \rangle_{H^{-1}(\O)\times H^1_0(\O)}; \;\;\;\; (u,v)=(u,v)_{L^2(\O)}.
\end{align}
The inner product on the weighted  the Hilbert space $L^2_{\mu}(\reals^+,H^1_0(\O))$ is defined by
\begin{align} \label{inner}
(u,v)_{\mu}:=\int_0^{\infty}\int_{\O}\nabla u(s) \cdot \nabla v(s) dx \, \mu(s) ds.
\end{align}
Also, $\norm{\cdot}_{\mu}$ represents the norm in $L^2_{\mu}(\reals^+,H^1_0(\O))$.
Subsequently,  we have:
 \begin{align} \label{H^1}
 H^1_{\mu}(\reals^+,H^1_0(\O)) =\{u\in L^2_{\mu}(\reals^+,H^1_0(\O)): u_t\in L^2_{\mu}(\reals^+,H^1_0(\O))\}.
 \end{align}
In particular, the space
$L^2_{\mu}(\reals^-,H^1_0(\O))$ consists of all functions $u: (-\infty,0]\longrightarrow H^1_0(\O)$ such that $u(-t)\in L^2_{\mu}(\reals^+,H^1_0(\O))$. Thus,
$$H^1_{\mu}(\reals^-,H^1_0(\O))=\{u\in L^2_{\mu}(\reals^-,H^1_0(\O)): u_t\in L^2_{\mu}(\reals^-,H^1_0(\O))\}.$$

\subsection{Main Results}
In light of the above discussion,  the following assumptions will be imposed throughout the paper.
\begin{assumption} \label{ass}\leavevmode
\begin{itemize}
\item ~~ $g$ is a continuous and monotone increasing  feedback with $g(0)=0$. In addition, the following growth condition at infinity holds:
there exist positive constants $a$ and $b$ such that, for $|s|\geq 1$,
$$a|s|^{m+1}\leq g(s)s\leq b|s|^{m+1}, \text{where\;\;} m\geq 1;$$
\item ~~$f\in C^1(\reals)$ such that $|f'(s)|\leq C(|s|^{p-1}+1)$, $1\leq p<6$;
\item ~~ $p\frac{m+1}{m}<6$;
\item ~~ $\mu \in C^1(\reals^+)\cap L^1(\reals^+)$ such that $\mu(s)> 0$ and $\mu'(s)\leq 0$ for all $s>0$, and $\mu(\infty)=0$;
\item ~~ $u_0(x,t)\in L^2_{\mu}(\reals^-,H^1_0(\O))$ with $\partial_t u_0(x,t)\in L^2_{\mu}(\reals^-,L^2(\O))$ such that $u_0: \mathbb R^- \rightarrow H^1_0(\O)$ and
$\partial_t u_0(x,t): \mathbb R^- \rightarrow L^2(\O)$ are weakly continuous at $t=0$. In addition, for all $t\leq 0$, $u_0(x,t)=0$ on $\G$.
\end{itemize}
\end{assumption}

Let us note here that in view  of the Sobolev imbedding $H^1(\O)  \hookrightarrow  L^6(\O) $ (in 3D),  the Nemytski operator $f(u)$ is locally Lipschitz continuous  from $ H^1_0(\O) $ into $L^2(\O) $ for the values $1\leq p \leq 3$. Hence, when the exponent of the sources $p$ lies in $1\leq p<3$, we call the source \emph{sub-critical}, and  \emph{critical}, if $p=3$.
 For the values $3< p\leq 5$  the source is  called \emph{supercritical}, and in this case the operator  $f(u)$ is not locally Lipschitz continuous  from $H^1_0(\O)$ into $L^2(\O) $. When  $5< p<6  $ the source is called \emph{super-supercritical}. In this case, the potential energy may not be defined in the finite energy space and the problem itself is no longer within the framework of potential well theory.

Recently, the boundary value problem for the wave equation with nonlinear damping and supercritical source (but without the memory term):
\begin{align*}
\begin{cases}
u_{tt}-\Delta u+g(u_t)=f(u), \text{\;\;in\;\;} \O \times (0,\infty),\\
\partial_{\nu}u+u+g_0(u_t)=h(u), \text{\;\;on\;\;} \Gamma \times (0,\infty),
\end{cases}
\end{align*}
has been studied in a series of papers \cite{BRT,B1,BL3,BL2, BL1}. One may see \cite{BRT-IMACS} for a summary of these results. Also, for other related work on nonlinear wave equations with supercritical sources, we refer the reader to \cite{Guo,GR1,GR2,GR,RTW,RS2,RW}.

It should be mentioned here  that (\ref{1.1}) is a monotonic problem well-suited for utilizing the theory of nonlinear semigroups and monotone operators (see for instance \cite{Barbu3,Sh}). Thus, for the local well-posedness of (\ref{1.1}), our strategy draws substantially from ideas in \cite{B1,BL1,GR,RW}. The essence of this strategy is to
write the problem as a Cauchy problem of semigroup form and
set up an appropriate phase space in order to verify the semigroup generator is m-accretive. The difficulty lies in the justification of the maximal monotonicity and coercivity of a certain nonlinear operator, which requires a correct choice of the function space and a combination of various techniques in monotone operator theory.

In order to state our main results, we begin  with the definition of a weak solution of  (\ref{1.1}).
\begin{definition} \label{def-weak}
A function $u(x,t)$ is said to be a \emph{weak solution} of (\ref{1.1}) on $(-\infty,T]$
if $u\in L^2_{\mu}((-\infty,T];H^1_0(\O)) \cap C([0,T];H^1_0(\O))$ such that $u_t\in L^2_{\mu}((-\infty,T];L^2(\O)) \cap C([0,T];L^2(\O))\cap L^{m+1}(\O \times (0,T))$ with:
\begin{itemize}
\item $u(x,t)=u_0(x,t)$ for $t\leq 0$;
\item The following variational identity holds for all $t\in [0,T]$, and all test functions $\phi \in \mathscr{F}$:  \begin{align} \label{weak}
&(u_t(t),\phi(t))-(u_t(0),\phi(0))-\int_0^t \int_{\O}u_t(\t)\phi_t(\t)dx d\t\notag\\
&+k(0)\int_0^t \int_{\O} \nabla u(\t) \cdot \nabla \phi(\t) dx d\t
+\int_0^t \int_0^{\infty} \int_{\O} \nabla u(\t-s) \cdot \nabla \phi(\t) dx k'(s) ds d\t\notag\\
&+\int_0^t \int_{\O} g(u_t(\t))\phi(\t) dx d\t
=\int_0^t \int_{\O} f(u(\t)) \phi(\t) dx d\t,
\end{align}
where $$ \mathscr{F} = \Big\{ \phi: \,\, \phi \in C([0,T];H^1_0(\O))\cap L^{m+1}(\O \times (0,T)) \text{  with  } \phi_t\in C([0,T];L^2(\O)) \Big\}.$$
\end{itemize}
\end{definition}

Our first theorem gives the existence and uniqueness of local weak solutions.

\begin{theorem}[{\bf Short-time existence}] \label{thm-exist}
Assume the validity of the Assumption \ref{ass}, then there exists a local (in time) weak solution $u$ to (\ref{1.1}) defined on $(-\infty,T]$ for some $T>0$ depending on the initial energy $E(0)$. Furthermore, the following energy identity holds:
\begin{align} \label{EI-0}
&E(t)+\int_0^t \int_{\O}g(u_t)u_t dx d\t-\frac{1}{2}\int_0^t \int_0^{\infty} \norm{\nabla w(\t, s)}_2^2 \mu'(s)ds d\t \notag\\
&=E(0)+\int_0^t \int_{\O} f(u)u_t dx d\t,
\end{align}
where $w(x,\t,s)=u(x,\t)-u(x,\t-s)$, and the quadratic energy is defined by
\begin{align} \label{energy}
E(t)=\frac{1}{2}\left(\norm{u_t(t)}_2^2+\norm{\nabla u(t)}_2^2+\int_0^{\infty}\norm{\nabla w(t,s)}_2^2 \mu(s)ds \right).
\end{align}
\end{theorem}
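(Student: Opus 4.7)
The plan is to recast (\ref{1.1}) as an abstract Cauchy problem on a suitable Hilbert space via the Dafermos history formalism, establish m-accretivity of the associated nonlinear operator in the absence of the source, and then recover a weak solution by a fixed-point argument in which the supercritical source is first truncated. Specifically, I would introduce the history variable $\eta^t(s):=u(t)-u(t-s)$; using $k(\infty)=1$ and $\int_0^\infty\mu(s)\,ds=k(0)-1$, a direct substitution eliminates the memory integral in (\ref{1.1}) in favor of $\int_0^\infty\mu(s)\Delta\eta^t(s)\,ds$ together with a local $-\Delta u$ term, while $\eta$ itself satisfies the transport equation $\eta_t+\eta_s=u_t$ on $\reals^+$ with $\eta^t(0)=0$. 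Taking $U=(u,u_t,\eta)$ in the phase space $\mathcal{H}:=H^1_0(\O)\times L^2(\O)\times L^2_\mu(\reals^+;H^1_0(\O))$ endowed with the natural energy inner product, (\ref{1.1}) is equivalent to $U_t+\mathcal{A}(U)=\mathcal{F}(U)$, where $\mathcal{A}$ encodes the linear wave/memory dynamics plus the monotone damping $u_t\mapsto g(u_t)$, and $\mathcal{F}$ carries the source.

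Next I would verify that $\mathcal{A}$ is m-accretive on $\mathcal{H}$. Accretivity follows from the monotonicity of $g$ and from the non-positivity of $\int_0^\infty\mu'(s)\norm{\nabla\eta(s)}_2^2\,ds$ (the viscoelastic dissipation that appears in (\ref{EI-0})). Maximality amounts, given $(\phi,\psi,\xi)\in\mathcal{H}$, to solving the range equation $(I+\lambda\mathcal{A})U=(\phi,\psi,\xi)$; the transport component is integrated explicitly along characteristics, reducing the problem to a nonlinear elliptic equation for $u$ in $H^1_0(\O)\cap L^{m+1}(\O)$, which I would dispatch by Browder-Minty after verifying coercivity and hemicontinuity (the absence of a growth restriction on $g$ is handled precisely by working in this intersection space). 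The Kato-Komura generation theorem then yields a strongly continuous nonlinear semigroup on $\mathcal{H}$, producing strong solutions for smooth data in the absence of the source.

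To incorporate the source, since $f$ may be supercritical, $f:H^1_0(\O)\to L^2(\O)$ need not be locally Lipschitz and a direct Picard iteration is unavailable. Following \cite{B1,BL1,GR,RW}, I would introduce a truncated source $f_K(u)$ that coincides with $f(u)$ whenever $\norm{u}_{H^1_0(\O)}\leq K$ and is Lipschitz on $\mathcal{H}$. For each reference function $\bar u$ in a ball $\mathcal{B}_R\subset C([0,T];H^1_0(\O))$, the semigroup step produces a unique mild solution $u=\Phi(\bar u)$ with forcing $f_K(\bar u)$, and the core task is to show that $\Phi$ is a strict contraction on $\mathcal{B}_R$ for sufficiently small $T$. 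The critical estimate is
\begin{equation*}
\int_0^t\!\int_\O|f(u)-f(\bar u)|\,|u_t-\bar u_t|\,dx\,d\tau \leq C(R)\,T^{\theta}\bigl(\norm{u-\bar u}_{L^\infty(0,T;H^1_0(\O))}^2+\norm{u_t-\bar u_t}_{L^{m+1}(\O\times(0,T))}^{m+1}\bigr),
\end{equation*}
with $\theta>0$, obtained via the bound $|f'(s)|\leq C(|s|^{p-1}+1)$, Hölder's inequality in space-time, and the Sobolev embedding $H^1_0(\O)\hookrightarrow L^6(\O)$; the Hölder chain closes precisely because of the third hypothesis in Assumption \ref{ass}, namely $p\frac{m+1}{m}<6$.

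Finally, the energy identity (\ref{EI-0}) is derived by testing the equation with $u_t$; this is rigorous at the level of the approximations (smooth data with truncated source), and one passes to the limit using Aubin-Lions compactness for $u_t$ in $L^{m+1}(\O\times(0,T))$, weak convergence of $\nabla u$, the chain rule for the memory term (producing the $\int\!\int\mu'\norm{\nabla w}_2^2$ contribution), and the monotonicity of $g$. Removing the truncation $f_K$ is routine once $\norm{u(t)}_{H^1_0(\O)}<K$ on the interval selected by the contraction radius. I expect the hardest step to be closing the contraction estimate above: the only available handle on $f(u)-f(\bar u)$ is the $L^{m+1}$ regularity of $u_t$ inherited from the damping, and the balance $p(m+1)/m<6$ is exactly sharp for turning that regularity into control of the supercritical source.
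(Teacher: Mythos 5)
Your overall skeleton (Dafermos history variable, the phase space $H^1_0(\O)\times L^2(\O)\times L^2_\mu(\reals^+;H^1_0(\O))$, m-accretivity of the damped linear/memory part, truncation of the source, energy estimates) matches the paper, but the way you incorporate the supercritical source would not go through. First, the truncation $f_K$ obtained by cutting off at $\norm{\nabla u}_2=K$ is globally Lipschitz into $L^2(\O)$ only when $f$ itself is locally Lipschitz from $H^1_0(\O)$ into $L^2(\O)$, i.e. when $p\le 3$; for $3<p<6$ the bound $\norm{f(u)-f(v)}_2\le C(\norm{u}_{H^1}^{p-1}+\norm{v}_{H^1}^{p-1}+1)\norm{u-v}_{H^1}$ fails even on $H^1$-bounded sets, so your claim that $f_K$ ``is Lipschitz on $\mathcal H$'' is false, and even the frozen forcing $f_K(\bar u(t))$ need only lie in $L^{6/p}(\O)\not\subset L^2(\O)$. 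The paper needs a second, amplitude cutoff $f_n=f\eta_n$ (Proposition \ref{prop-1}), which is globally Lipschitz into $L^2(\O)$ for each $n$ and locally Lipschitz from $H^{1-\e}(\O)$ into $L^{\frac{m+1}{m}}(\O)$ with constant independent of $n$; that uniformity is what makes the existence time $n$-independent. Second, and more seriously, your contraction estimate cannot be closed: the energy inequality for the difference of two iterates produces only the nonnegative term $\int(g(u_t)-g(\bar u_t))(u_t-\bar u_t)$, and since $g$ is merely monotone (no strong monotonicity is assumed) this does not control $\norm{u_t-\bar u_t}_{L^{m+1}}^{m+1}$ on the left, while pairing $f(u)-f(\bar u)$ against $u_t-\bar u_t$ in $L^2$ instead would again require $p\le 3$. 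Note that if such a contraction worked under Assumption \ref{ass} alone it would already yield uniqueness, whereas uniqueness for $p>3$ requires the additional hypotheses $u_0(0)\in L^{\frac{3(p-1)}{2}}(\O)$ and $f\in C^2$ together with the double integration by parts of Section 3.2. The paper therefore uses no fixed point: the (truncated) source is absorbed into the generator and handled by Kato's theorem, and the passage $n\to\infty$ is done via uniform energy bounds, Aubin compactness, and a Minty-type identification $g^*=g(u')$ of the weak limit of $g(u_n')$, which yields existence without claiming uniqueness.

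The derivation of the energy identity is also a genuine gap. Passing to the limit in the identities satisfied by the approximations only yields the energy \emph{inequality} (\ref{enrgy-ineq}) of Remark \ref{rem-2.6}: one has $g(u_n')\rightharpoonup g(u')$ only weakly in $L^{\frac{m+1}{m}}(\O\times(0,t))$ while $u_n'\to u'$ in $C([0,T];L^2(\O))$, which does not allow passage to the limit in $\int_0^t\int_\O g(u_n')u_n'\,dx\,d\t$, and the kinetic and memory terms pass only with lower semicontinuity/Fatou. Since (\ref{EI-0}) is part of the statement of Theorem \ref{thm-exist}, a separate argument is required: the paper proves the identity directly for the weak solution by mollifying in space with $T_\e=(I-\e\Delta)^{-1}$, testing with $u_\e'$, and passing to the limit using the contractivity properties of $T_\e$ on $L^q(\O)$ and Fatou's lemma for the $\mu'$ term (Section 3.1 and the Appendix). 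Your appeal to Aubin--Lions and monotonicity does not substitute for this step.
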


Our next result states that weak solutions of (\ref{1.1}) depend continuously on the initial data.
\begin{theorem}[{\bf Continuous dependence on initial data}]  \label{thm-cont}
In addition to the Assumption \ref{ass}, assume that  $u_0(0)\in L^{\frac{3(p-1)}{2}}(\O)$ and $f\in C^2(\reals)$ such that
$|f''(s)|\leq C(|s|^{p-2}+1)$, for $p>3$.
If $u_0^n\in L^2_{\mu}(\reals^-, H^1_0(\O))$ is a sequence of initial data such that $u_0^n\longrightarrow u_0$ in $L^2_{\mu}(\reals^-,H^1_0(\O))$ with
$u^n_0(0)\longrightarrow u_0(0)$ in $H^1_0(\O)$ and in $L^{\frac{3(p-1)}{2}}(\O)$, $\frac{d}{dt}u^n_0(0)\longrightarrow \frac{d}{dt}u_0(0)$ in $L^2(\O)$, then the corresponding weak solutions $u_n$ and $u$ of (\ref{1.1}) satisfy
\begin{align*}
u_n\longrightarrow u \text{\;\;in\;\;}  C([0,T];H^1(\O)) \text{\;\;\;and\;\;\;}
u_n'\longrightarrow u'  \text{\;\;in\;\;}  C([0,T];L^2(\O)).
\end{align*}
\end{theorem}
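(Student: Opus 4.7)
\textbf{Proof plan for Theorem \ref{thm-cont}.} Let $z_n := u_n - u$, and let $w_n$, $w$ be the history variables associated with $u_n$ and $u$ in the sense of Theorem \ref{thm-exist}. The strategy is to control the difference energy
\begin{equation*}
\mathcal{E}_n(t) := \tfrac{1}{2}\Big(\norm{z_n'(t)}_2^2 + \norm{\nabla z_n(t)}_2^2 + \int_0^{\infty}\norm{\nabla(w_n-w)(t,s)}_2^2\,\mu(s)\,ds\Big)
\end{equation*}
by a Gronwall inequality. Subtracting the equations for $u_n$ and $u$ and formally testing with $z_n'$ (justified through the same regularization used to establish the energy identity in Theorem \ref{thm-exist}, since $z_n'$ is not immediately admissible), the monotonicity of $g$ eliminates the damping contribution $(g(u_n')-g(u'))(u_n'-u')\geq 0$ and the sign $\mu'\leq 0$ in the analogue of (\ref{EI-0}) removes one further term. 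The problem reduces to bounding
\begin{equation*}
I(t) := \int_0^t\int_{\O} (f(u_n)-f(u))\,z_n'\,dx\,d\t.
\end{equation*}

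\textbf{Main obstacle: the supercritical source.} For $p>3$, $f$ is not locally Lipschitz from $H^1_0(\O)$ into $L^2(\O)$, so $I$ cannot be handled by Cauchy--Schwarz followed by Sobolev. Following the technique of \cite{BL1,GR}, I would integrate by parts in $\t$ \emph{twice}. The first integration produces
\begin{align*}
I(t) &= \Big[\int_{\O}(f(u_n)-f(u))\,z_n\,dx\Big]_{\t=0}^{\t=t}\\
&\quad -\int_0^t\int_{\O}\big[f'(u_n)\,z_n' + (f'(u_n)-f'(u))\,u'\big]\,z_n\,dx\,d\t.
\end{align*}
The boundary term at $\t=0$ is estimated, via $|f(a)-f(b)|\leq C(|a|^{p-1}+|b|^{p-1}+1)|a-b|$ and H\"older, by $C\big(\norm{u_0^n(0)}_{3(p-1)/2}^{p-1}+\norm{u_0(0)}_{3(p-1)/2}^{p-1}+1\big)\norm{z_n(0)}_6^2$, which tends to zero thanks to the hypothesized convergences in $L^{3(p-1)/2}(\O)$ and $H^1_0(\O)\hookrightarrow L^6(\O)$. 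The analogous boundary term at $\t=t$ is absorbed into $\mathcal{E}_n(t)$ via Young's inequality, provided $u_n(t),u(t)$ remain bounded in $L^{3(p-1)/2}(\O)$ on $[0,T]$---a propagation of regularity from the hypothesis $u_0(0)\in L^{3(p-1)/2}(\O)$ via a standard a priori estimate. The mixed term involving $f'(u_n)-f'(u)$ is pointwise bounded by $C(|u_n|^{p-2}+|u|^{p-2}+1)|u'|\,|z_n|^2$ thanks to the $C^2$ hypothesis on $f$, and a three-factor H\"older inequality using $u'\in L^{m+1}(\O\times(0,T))$ from the damping estimate and $z_n\in L^{\infty}(0,T;L^6(\O))$ closes this bound precisely under the compatibility condition $p(m+1)/m<6$.

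The residual term $\int_0^t\int_{\O}f'(u_n)\,z_n'\,z_n\,dx\,d\t = \tfrac{1}{2}\int_0^t\int_{\O}f'(u_n)\,\partial_\t(z_n^2)\,dx\,d\t$ is dispatched by a \emph{second} integration by parts in $\t$, yielding boundary contributions (handled as above) together with the interior term $-\tfrac{1}{2}\int_0^t\int_{\O}f''(u_n)\,u_n'\,z_n^2\,dx\,d\t$, bounded via the growth $|f''|\leq C(|s|^{p-2}+1)$ by the same H\"older argument that exploits $p(m+1)/m<6$. Assembling the estimates, after absorbing a fraction of $\mathcal{E}_n(t)$ on the left, produces
\begin{equation*}
\mathcal{E}_n(t)\leq \e_n + \int_0^t \Psi(\t)\,\mathcal{E}_n(\t)\,d\t,\qquad t\in[0,T],
\end{equation*}
with $\e_n\to 0$ from the assumed convergence of initial data and $\Psi\in L^1(0,T)$ uniformly in $n$. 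Gronwall's inequality then yields $\sup_{[0,T]}\mathcal{E}_n\to 0$, which in particular delivers the claimed convergences $u_n\to u$ in $C([0,T];H^1(\O))$ and $u_n'\to u'$ in $C([0,T];L^2(\O))$. The hardest step is the double integration by parts: it transfers one time derivative away from $z_n'$ but generates boundary terms whose control requires precisely the $L^{3(p-1)/2}$ initial hypothesis, and an $f''$-weighted space--time integral whose integrability is permitted exactly by the growth-matching condition $p(m+1)/m<6$.
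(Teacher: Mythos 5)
Your overall architecture matches the paper's: difference energy, energy inequality via the regularization used for the energy identity, monotonicity of $g$ and $\mu'\leq 0$, then a double integration by parts in $\tau$ on the source difference, with the $\tau=0$ boundary term handled by the $L^{3(p-1)/2}$ convergence of the data, the $f''$-weighted and $(f'(u_n)-f'(u))$-terms handled by three-factor H\"older using $p\frac{m+1}{m}<6$ (i.e.\ $\frac{6}{6-p}<m+1$) and the uniform bound on $\int_0^T\norm{u_n'}_{m+1}^{m+1}$, and finally Gronwall with $\Psi=\norm{u_n'}_{m+1}+\norm{u'}_{m+1}+1\in L^1(0,T)$. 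However, there is a genuine gap at the single hardest point: the boundary term at $\tau=t$, namely $\int_{\Omega}\big(|u_n(t)|^{p-1}+|u(t)|^{p-1}\big)|\tilde u_n(t)|^2\,dx$. You propose to ``absorb it via Young's inequality, provided $u_n(t),u(t)$ remain bounded in $L^{3(p-1)/2}(\Omega)$ by a standard propagation of regularity.'' Two problems. First, for $5<p<6$ one has $\tfrac{3(p-1)}{2}>6$, so this bound does not follow from the finite-energy estimate $u_n\in L^\infty(0,T;H^1_0(\Omega))\hookrightarrow L^\infty(0,T;L^6(\Omega))$, and no propagation of $L^{3(p-1)/2}$ regularity is proved (or provable by ``standard'' means) in this supercritical weak-solution framework; the theorem only assumes the \emph{initial datum} $u_0(0)$ lies in that space. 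Second, even where the bound is automatic ($3<p\leq 5$), H\"older gives only $\norm{u_n(t)}_{3(p-1)/2}^{p-1}\norm{\tilde u_n(t)}_6^2\leq C(K)\,\tilde E_n(t)$ with a constant of order $K^{p-1}$ that is in no way small, so it cannot be absorbed into the left-hand side, and Young's inequality produces no free small parameter here because the term is already exactly quadratic in $\tilde u_n(t)$.

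The paper's proof resolves precisely this term by a two-case argument you would need to supply. For $3<p<5$ it trades a bit of Sobolev exponent: $\int_{\Omega}|u_n(t)|^{4-\epsilon}|\tilde u_n(t)|^2dx\leq \norm{u_n(t)}_6^{4-\epsilon}\norm{\tilde u_n(t)}_{\frac{6}{1+\epsilon/2}}^2\leq C(K)\norm{\tilde u_n(t)}^2_{H^{1-\epsilon/4}}$, and then interpolation yields $\epsilon\norm{\nabla\tilde u_n(t)}_2^2+C_\epsilon\norm{\tilde u_n(t)}_2^2$, where the first piece has a genuinely small coefficient and the second is lowered to $\tilde E_n(0)+T\int_0^t\tilde E_n(\tau)d\tau$ by writing $\tilde u_n(t)=\tilde u_n(0)+\int_0^t\tilde u_n'(\tau)d\tau$. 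For $5\leq p<6$ it decomposes $u_n(t)$ as $(u_n(t)-u_0^n(0))+(u_0^n(0)-u_0(0))+(u_0(0)-\phi)+\phi$ with $\phi\in C_0(\Omega)$ approximating $u_0(0)$ in $L^{3(p-1)/2}(\Omega)$: the first piece equals $\int_0^t u_n'\,d\tau$ and is controlled, using $\int_0^T\norm{u_n'}_{m+1}^{m+1}dt\leq C_K$ and $\tfrac{3(p-1)}{2(m+1)}<1$, by $C(K)T^{\frac{m(p-1)}{m+1}}\tilde E_n(t)$, while the middle pieces contribute $\epsilon\,\tilde E_n(t)$ and the $\phi$-piece is lower order; choosing $T$ and $\epsilon$ small makes the total coefficient of $\tilde E_n(t)$ strictly less than one, which is what allows absorption before Gronwall. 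Without an argument of this kind (or a proof of the $L^{3(p-1)/2}$ propagation you invoke, which is not available), your estimate does not close.
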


The uniqueness of weak solutions is a corollary of Theorem \ref{thm-cont}.
\begin{corollary}[{\bf Uniqueness}]  \label{thm-unique}
In addition to the Assumption \ref{ass}, we assume $u_0(0)\in L^{\frac{3(p-1)}{2}}(\O)$ and $f\in C^2(\reals)$ such that
$|f''(s)|\leq C(|s|^{p-2}+1)$, for $p>3$. Then, weak solutions of (\ref{1.1}) are unique.
\end{corollary}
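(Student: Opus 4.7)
The plan is immediate: Corollary \ref{thm-unique} will follow from Theorem \ref{thm-cont} applied to a constant sequence of initial data. Concretely, I would suppose $u$ and $v$ are two weak solutions of (\ref{1.1}) on $(-\infty,T]$ sharing the same initial history $u_0$, which by hypothesis satisfies Assumption \ref{ass} together with $u_0(0) \in L^{3(p-1)/2}(\O)$. I would then set $u_0^n := u_0$ for every $n$, so that all the convergence requirements of Theorem \ref{thm-cont} hold trivially: the sequence is constant in $L^2_{\mu}(\reals^-, H^1_0(\O))$, its trace $u_0^n(0)$ is constant in $H^1_0(\O)$ and in $L^{3(p-1)/2}(\O)$, and $\tfrac{d}{dt} u_0^n(0)$ is constant in $L^2(\O)$.

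Next, I would take the ``corresponding weak solutions'' $u_n$ associated with $u_0^n$ to be the constant sequence $u_n \equiv v$, which by hypothesis is itself a weak solution with initial history $u_0^n = u_0$. Theorem \ref{thm-cont}, applied with $u$ as the weak solution of the limit problem, then yields
\begin{align*}
v = u_n \longrightarrow u \quad \text{in } C([0,T]; H^1_0(\O)),
\end{align*}
which forces $u(t) = v(t)$ for every $t \in [0,T]$. Combined with $u(t) = v(t) = u_0(t)$ for $t \leq 0$ from the definition of a weak solution, this gives $u \equiv v$ on $(-\infty,T]$.

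The only subtlety worth noting is that for this reduction to be legitimate, Theorem \ref{thm-cont} must be applicable without presupposing uniqueness: its convergence conclusion should hold for \emph{arbitrary} weak solutions $u_n$ of the perturbed problems and an \emph{arbitrary} weak solution $u$ of the limit problem. This is the standard formulation of a continuous-dependence statement designed precisely for deriving uniqueness, and the proof of Theorem \ref{thm-cont}, which proceeds via an energy-type estimate on the difference of two weak solutions, never invokes uniqueness. Under that caveat, there is no real obstacle at this step; the entire analytical burden tied to the supercritical range $p > 3$ and to the growth condition $|f''(s)| \leq C(|s|^{p-2}+1)$ has already been absorbed into Theorem \ref{thm-cont}.
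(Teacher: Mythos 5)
Your proposal is correct and is exactly the paper's intended argument: the paper derives uniqueness directly from Theorem \ref{thm-cont} by comparing two weak solutions with the same initial history, which amounts to your constant-sequence reduction. Your caveat is also satisfied in the paper, since the proof of Theorem \ref{thm-cont} rests on the energy identity established for arbitrary weak solutions and an energy estimate for the difference of two such solutions, with no appeal to uniqueness.
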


Our final result states: if the damping dominates the source term, then the solution is global. More precisely, we have
\begin{theorem}[{\bf Global existence}] \label{thm-global}
In addition to Assumption \ref{ass}, further assume $u_0(0) \in L^{p+1}(\O)$. If $m\geq p$, then the weak solution of (\ref{1.1}) is global.
\end{theorem}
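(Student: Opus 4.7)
The plan is to derive an a priori bound on $E(t)$ valid on every finite subinterval of the existence time, and then invoke the continuation principle applied to Theorem~\ref{thm-exist}. Setting $G(t):=\int_0^t\!\int_\Omega g(u_t)u_t\,dx\,d\tau$ and dropping the nonnegative memory-dissipation term $-\tfrac{1}{2}\int_0^t\!\int_0^\infty \|\nabla w\|_2^2\,\mu'(s)\,ds\,d\tau$ from the energy identity (\ref{EI-0}), I obtain
\begin{equation*}
E(t)+G(t)\ \leq\ E(0)+\int_0^t\!\int_\Omega f(u)u_t\,dx\,d\tau.
\end{equation*}
The growth condition $|f(u)|\leq C(|u|^p+1)$ together with Young's inequality for the conjugate pair $(m+1,\,(m+1)/m)$ gives
\begin{equation*}
|f(u)u_t|\ \leq\ \epsilon\,|u_t|^{m+1}+C_\epsilon\bigl(|u|^{p(m+1)/m}+1\bigr).
\end{equation*}
The hypothesis $m\geq p$ enters in a crucial way: $p(m+1)/m=p+p/m\leq p+1$, so $|u|^{p(m+1)/m}\leq |u|^{p+1}+1$. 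Combining this with the elementary bound $|u_t|^{m+1}\leq a^{-1}g(u_t)u_t+1$ (immediate from $g(s)s\geq a|s|^{m+1}$ for $|s|\geq 1$ and monotonicity of $g$) and choosing $\epsilon$ small enough to absorb the damping term yields
\begin{equation}\label{eq-planI}
E(t)+\tfrac{1}{2}G(t)\ \leq\ E(0)+C\!\int_0^t \|u(\tau)\|_{p+1}^{p+1}\,d\tau+Ct.
\end{equation}

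The second step is a companion estimate for $\|u(t)\|_{p+1}^{p+1}$. Carried out on the smooth approximating sequence from Theorem~\ref{thm-exist} and then passed to the limit by weak lower semicontinuity, differentiation and the same Young splitting produce
\begin{equation*}
\tfrac{d}{dt}\|u\|_{p+1}^{p+1}\ =\ (p+1)\!\int_\Omega |u|^{p-1}u\,u_t\,dx\ \leq\ C\!\int_\Omega|u_t|^{m+1}\,dx+C\bigl(\|u\|_{p+1}^{p+1}+1\bigr);
\end{equation*}
using the coercivity of $g$ once more, integrating in $\tau\in[0,t]$, and invoking the hypothesis $u_0(0)\in L^{p+1}(\Omega)$, one obtains
\begin{equation}\label{eq-planII}
\|u(t)\|_{p+1}^{p+1}\ \leq\ \|u_0(0)\|_{p+1}^{p+1}+C_1 G(t)+C\!\int_0^t\|u(\tau)\|_{p+1}^{p+1}\,d\tau+Ct.
\end{equation}

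Finally, I add $\alpha\cdot$(\ref{eq-planII}) to (\ref{eq-planI}) with $\alpha>0$ chosen so that $\alpha C_1\leq 1/4$; the $G(t)$ produced on the right-hand side of (\ref{eq-planII}) is then absorbed into the $\tfrac{1}{2}G(t)$ on the left-hand side of (\ref{eq-planI}). Setting $\Psi(t):=E(t)+\alpha\|u(t)\|_{p+1}^{p+1}$, this yields
\begin{equation*}
\Psi(t)+\tfrac{1}{4}G(t)\ \leq\ K(T)+C\!\int_0^t\Psi(\tau)\,d\tau\qquad\text{for all }\,t\in[0,T],
\end{equation*}
with $K(T)$ depending only on $E(0)$, $\|u_0(0)\|_{p+1}$, and $T$. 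Gronwall's inequality delivers $\Psi(t)\leq K(T)\,e^{CT}$ on every subinterval $[0,T]$ of existence, which precludes finite-time blow-up of the quantity controlling extendability in Theorem~\ref{thm-exist}; the continuation principle then produces a solution on $[0,\infty)$. The main technical obstacle is the rigorous justification of (\ref{eq-planII}): when $p>5$, a weak solution in the sense of Definition~\ref{def-weak} is not automatically continuous into $L^{p+1}(\Omega)$, so the identity for $\tfrac{d}{dt}\|u\|_{p+1}^{p+1}$ must be established on the Galerkin/regularized approximants underlying Theorem~\ref{thm-exist} and recovered in the limit via Fatou's lemma and weak lower semicontinuity, with $u_0(0)\in L^{p+1}(\Omega)$ supplying the only ingredient not already guaranteed by Assumption~\ref{ass}.
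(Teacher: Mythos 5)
Your argument is correct and is essentially the paper's own proof in slightly different bookkeeping: the paper augments the energy to $\mathcal E(t)=E(t)+\frac{1}{p+1}\|u(t)\|_{p+1}^{p+1}$ and estimates both $\int f(u)u_t$ and $\int |u|^{p-1}u\,u_t$ via H\"older/Young, absorbing $\|u_t\|$ into the damping through $g(s)s\geq a|s|^{m+1}$ and $m\geq p$, then concludes by Gronwall and the continuation principle, exactly as you do by combining your two inequalities with the weight $\alpha$. Your use of the conjugate pair $\bigl(m+1,\tfrac{m+1}{m}\bigr)$ together with $p\tfrac{m+1}{m}\leq p+1$ is just a variant of the paper's splitting, so no substantive difference or gap remains.
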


\begin{remark}\label{rem-source}
The classical condition that the ``damping dominates the source," $m\geq p$, in Theorem \ref{thm-global} can be dispensed with if the source $f(u)$ in the equation satisfies suitable dissipativity conditions. For example, if the scalar function $f$ is monotone decreasing with $f(s)s \leq 0$ for all $s\in \mathbb{R}$, then the assumption $m\geq p$ can be removed.
\end{remark}

\section{Local solutions}

This section is devoted to prove the local existence statement in Theorem \ref{thm-exist}.

\subsection{Operator Theoretic Formulation}

In order to study the local solvability of (\ref{1.1}), we exploit a remarkable  idea due to Dafermos \cite{Daf2,Daf1}: in addition to the displacement and velocity, we regard the past history of the displacement as a third variable. More precisely, introduce the {\em history function}:
\begin{align} \label{Def-w}
w(x,t,s)= u(x,t)-u(x,t-s), \;\;s\geq 0.
\end{align}
After  simple manipulations,  problem (\ref{1.1}) can be put into the following coupled system:
\begin{align} \label{2.2}
\begin{cases}
 u_t(x,t)=v(x,t) \\
 v_{t}(x,t)=\Delta u(x,t) + \int_0^{\infty} \mu(s) \Delta w(x,t,s) ds  - g(v(x,t)) + f(u(x,t))\\
 w_t(x,t,s)=v(x,t)-w_s(x,t,s),
\end{cases}
\end{align}
with boundary and initial conditions
\begin{align}
\begin{cases}
 u(x,t)=0 \quad \text{on }  \Gamma \times [0,\infty),\\
 w(x,t,s)=0 \quad \text{on }  \Gamma \times [0,\infty) \times [0,\infty) \\
 u(x,0)=u_0(x,0)\\
 v(x,0)=\frac{\partial u_0}{\partial t}(x,0)\\
 w(x,0,s)=u_0(x,0)-u_0(x,-s)\\
 w(x,t,0)=0.
\end{cases}
\end{align}

\begin{remark}  \label{remark1}
System (\ref{2.2}) with the given initial and boundary conditions is equivalent to the original system (\ref{1.1}). In fact, by using the method of characteristics (in the $(t,s)$-plane where $x$ is regarded as a fixed parameter), one can see that the equations $w_t=v-w_s$ and $u_t=v$ in (\ref{2.2}) with the condition $w(x,t,0)=0$ imply $w(x,t,s)= u(x,t)-u(x,t-s)$ for $s\geq 0$.
\end{remark}

We establish the local in time existence of weak solutions in the so called \emph{past history framework}, i.e., the unknown function $(u,v,w)$ is in the phase space $$H:=H^1_0(\O)\times L^2(\O)\times L^2_\mu(\reals^+,H^1_0(\O)).$$
If $U=(u,v,w), \, \hat U=(\hat u,\hat v,\hat w)\in H$, then the inner product on the Hilbert space $H$ is
 the natural inner product given by:
$$(U, \hat U)_H:= \int_\O \nabla u \cdot \nabla \hat  u \, dx + (v,\hat v) + (w, \hat w)_{\mu},$$
where $(v,\hat v)$ and  $(w, \hat w)_{\mu},$ are given in (\ref{1.2})-(\ref{inner}).

If $\xi \in L^2_\mu(\reals^+,H^2(\O)\cap H^1_0(\O))$, then clearly $\int_0^{\infty}\mu(s)\Delta \xi(s)ds\in L^2(\O)\subset H^{-1}(\O)$. Thus,  for all $\p\in H^1_0(\O)$, we have
\begin{align} \label{e0}
\left \langle \int_0^{\infty}\mu(s)\Delta \xi (s)ds, \p \right \rangle &=\int_{\O}\left(\int_0^{\infty}\mu(s)\Delta  \xi (s)ds\right) \p dx \notag\\
&=-\int_0^{\infty}\int_{\O}\nabla  \xi (s)\cdot \nabla \p dx \mu(s) ds=-( \xi ,\p)_{\mu},
\end{align}
where  $( \xi ,\phi)_{\mu}$ is defined in (\ref{inner}).

Now, we define the operator $\mathcal L: D(\mathcal L)\subset L^2_{\mu}(\reals^+, H^1_0(\O))\longrightarrow H^{-1}(\O)$ by
$$\mathcal L( \xi)=\int_0^{\infty}\mu(s)\Delta  \xi (s)ds$$
where $D(\mathcal L)=L^2_{\mu}(\reals^+, H^2(\O)\cap H^1_0(\O))$. It follows from (\ref{e0}) that
$\langle \mathcal L( \xi ),\phi \rangle=-( \xi ,\phi)_{\mu}$ for all $\p\in H^1_0(\O)$.
Clearly,  $\mathcal L$ is a linear mapping, and in addition, $\mathcal L$ is bounded from $D(\mathcal L)$ into $ H^{-1}(\O)$. Indeed,    for  all  $ \xi \in D(\mathcal L)$, we have
\begin{align*}
\norm{\mathcal L( \xi )}_{H^{-1}(\O)}=\sup_{\norm{\p}_{H_0^1(\O)}=1}|( \xi ,\p)_{\mu}|\leq \norm{ \xi}_{\mu}\left(\int_0^{\infty}\mu(s)ds\right)^{\frac{1}{2}}=\norm{ \xi }_{\mu}(k(0)-1).
\end{align*}
Therefore,  we can extend $\mathcal L$ to be a bounded linear operator (which is still denoted by $\mathcal L$)  from $L^2_{\mu}(\reals^+, H^1_0(\O))$ to $H^{-1}(\O)$ such that, for any $ \xi \in L^2_\mu(\reals^+,H^1_0(\O))$,
\begin{align} \label{duality}
\left \langle \mathcal L( \xi), \p \right \rangle=-( \xi ,\p)_{\mu}
\end{align}
for all $\phi\in H^1_0(\O)$.

To this end, we define an (abstract)  operator $\mathscr{A}: D(\mathscr A)\subset H\longrightarrow H$ by
\[\mathscr A\left(U\right)=\begin{pmatrix}
-v \\
-\Delta u+g(v)- \mathcal L(w)-f(u) \\
-v+w_s
\end{pmatrix} ^{tr}\]
with its domain
\begin{align*}
D(\mathscr A)=\{&(u,v,w)\in H^1_0(\O)\times H^1_0(\O) \times H^1_{\mu}(\reals^+,H^1_0(\O)):
g(v)\in H^{-1}(\O)\cap L^1(\O), \notag\\
&-\Delta u+g(v)-\mathcal L(w)-f(u) \in L^2(\O), \; w(0)=0\}.
\end{align*}
Since the original  $w$ is a  function of the three variables $(x,t,s)$, then, in the definition of the operator $\mathscr A$ above,  by saying $ w\in H^1_{\mu}(\reals^+,H^1_0(\O))$ we only mean the mapping: $\reals^+ \ni s\longmapsto w(\cdot ,s)$  belongs to $ H^1_{\mu}(\reals^+,H^1_0(\O))$,  as defined in (\ref{H^1}).

Henceforth, system (\ref{2.2}) can be reduced to the Cauchy problem:
\begin{align} \label{semigroup}
\begin{cases}
U_t+\mathscr{A}U=0,\\
U(0)=U_0=\Big(u_0(x,0),\partial_t u_0(x,0),u_0(x,0)-u_0(x,-s)\Big).
\end{cases}
\end{align}

\subsection{Globally Lipschitz Source}

Our first proposition gives the existence of a global solution to the Cauchy problem (\ref{semigroup}) provided the source $f$ is globally Lipschitz from $H^1_0(\O)$ to $L^2(\O)$.

\begin{proposition} \label{lemma-1}
Assume $g$ is a continuous and monotone increasing function such that $g(0)=0$. In addition, assume $f: H_0^1(\O)\longrightarrow L^2(\O)$ is globally Lipschitz continuous. Then, system (\ref{semigroup}) has a unique global strong solution $U\in W^{1,\infty}(0,T;H)$ for arbitrary $T>0$ provided the initial datum $U_0\in D(\mathscr A)$.
\end{proposition}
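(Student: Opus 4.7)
The plan is to decompose $\mathscr{A} = \mathscr{A}_1 + \mathscr{F}$, where $\mathscr{F}(U) := (0, -f(u), 0)^{\mathrm{tr}}$ and $\mathscr{A}_1$ collects the remaining three rows of $\mathscr{A}$. By hypothesis $\mathscr{F}: H \to H$ is globally Lipschitz, since
\[\norm{\mathscr{F}(U) - \mathscr{F}(\hat U)}_H = \norm{f(u) - f(\hat u)}_2 \leq C\norm{u - \hat u}_{H^1_0(\O)} \leq C\norm{U - \hat U}_H.\]
I would first prove that $\mathscr{A}_1$ is m-accretive on $H$, and then invoke the standard Lipschitz-perturbation theorem for nonlinear semigroups (Kato, Crandall--Liggett, Barbu): $\mathscr{A}_1 + \mathscr{F} + \omega I$ is m-accretive for $\omega$ equal to the Lipschitz constant of $\mathscr{F}$, and for $U_0 \in D(\mathscr{A}) = D(\mathscr{A}_1)$ the Cauchy problem (\ref{semigroup}) then has a unique strong solution $U \in W^{1,\infty}(0,T;H)$ on every bounded interval.

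\textbf{Monotonicity of $\mathscr{A}_1$.} A direct expansion of $(\mathscr{A}_1 U - \mathscr{A}_1 \hat U, U - \hat U)_H$ produces pairs of cross terms that cancel exactly. The contribution $-\int_\O \nabla(v-\hat v)\cdot \nabla(u-\hat u)\,dx$ coming from $-v$ in the first component annihilates the term $\int_\O \nabla(u-\hat u)\cdot \nabla(v-\hat v)\,dx$ produced by $-\Delta(u-\hat u)$ after integration by parts (legitimate since $v - \hat v \in H^1_0(\O)$). The pairing of $-\mathcal{L}(w-\hat w)$ with $v-\hat v$ equals $(w-\hat w, v-\hat v)_\mu$ by identity (\ref{duality}), cancelling $-(v-\hat v, w-\hat w)_\mu$ from the third row. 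What remains is
\[\int_\O \bigl(g(v) - g(\hat v)\bigr)(v-\hat v)\,dx + (w_s - \hat w_s, w-\hat w)_\mu,\]
which is non-negative: the first integral by monotonicity of $g$, the second by integrating by parts in $s$ and using $w(0) = \hat w(0) = 0$, $\mu(\infty)=0$, and $\mu'\leq 0$, which yields $-\tfrac{1}{2}\int_0^\infty \|\nabla(w-\hat w)(s)\|_2^2 \mu'(s)\,ds \geq 0$.

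\textbf{Range condition.} To establish $R(I + \mathscr{A}_1) = H$, solve $(I + \mathscr{A}_1)U = (F_1, F_2, F_3) \in H$. The first row forces $v = u - F_1$, and the third row reduces to the first-order ODE $w_s + w = v + F_3$ with $w(0) = 0$, whose explicit solution is $w(s) = (1 - e^{-s})v + W(s)$ with $W(s) := \int_0^s e^{\tau - s} F_3(\tau)\,d\tau$. Substituting and using $\mathcal{L}(w) = \beta \Delta v + \mathcal{L}(W)$, where $\beta := \int_0^\infty \mu(s)(1 - e^{-s})\,ds \in (0, k(0)-1)$, collapses the system into the single monotone elliptic equation
\[u - (1 + \beta)\Delta u + g(u - F_1) = F_1 + F_2 - \beta \Delta F_1 + \mathcal{L}(W) \quad \text{in } H^{-1}(\O).\]
This is coercive on $H^1_0(\O)$; solvability follows from Minty--Browder/Brezis surjectivity after regularizing $g$ by its Yosida approximations $g_n$, solving for $u_n$, and passing to the limit via the a priori estimates provided by monotonicity.

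\textbf{Main obstacle.} The delicate step is precisely this range equation, since $g$ has no growth restriction at infinity, and one must verify the $L^1$- and $H^{-1}$-conditions built into $D(\mathscr{A}_1)$, namely $g(v) \in H^{-1}(\O) \cap L^1(\O)$ and $-\Delta u + g(v) - \mathcal{L}(w) \in L^2(\O)$. The $L^1$ bound is obtained by testing the limiting equation against $v$ itself; the $L^2$ statement is then read off the equation once $u$ has been produced. With m-accretivity of $\mathscr{A}_1$ in hand, the global strong solution of (\ref{semigroup}) follows from the generation theorem applied to the Lipschitz perturbation $\mathscr{A}_1 + \mathscr{F}$.
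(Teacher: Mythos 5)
Your argument is sound, but it takes a genuinely different route from the paper's. The paper does not peel off the source: it proves directly that $\mathscr A+\a I$ is accretive for $\a\geq L_f/2$, and for the range condition it eliminates $u$ (not $w$) via $u=(v+a)/\l$, obtaining an operator $T$ on $X=H^1_0(\O)\times L^2_\mu(\reals^+,H^1_0(\O))$ split as a strongly monotone, hemicontinuous part $T_1$ (this is where $\l$ must be taken large relative to $L_f$), plus $T_2=g(\cdot)$, maximal monotone as the subdifferential of a convex integral by Br\'ezis' theorem \cite{Bre}, plus $T_3=\partial_s$ with $w(0)=0$; surjectivity then follows from sum theorems (\cite{Barbu3} and Proposition 7.1 of \cite{GR}) together with coercivity. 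You instead treat $f$ as a globally Lipschitz perturbation of an $f$-free generator $\mathscr A_1$ (legitimate: the quasi-m-accretive perturbation theorem applies, and $D(\mathscr A_1)=D(\mathscr A)$ since $f(u)\in L^2(\O)$ always), and in the range condition you solve the $s$-transport equation in closed form, eliminating both $v$ and $w$ and reducing everything to the single inclusion $u-(1+\beta)\Delta u+g(u-F_1)\ni \text{right-hand side}$ in $H^{-1}(\O)$. This buys a more elementary and transparent range argument (no product-space sum theorems, and $\l=1$ suffices), at the cost of concentrating the difficulty in that one inclusion: since $g$ has no growth restriction, $g(u-F_1)$ must be interpreted through the subdifferential of the convex integral, i.e.\ your Yosida limit has to be identified with $g(u-F_1)$ together with the memberships $g(v)\in H^{-1}(\O)\cap L^1(\O)$ and the agreement of the duality pairing with the pointwise integral -- this is exactly the Br\'ezis result the paper invokes, so you have relocated rather than avoided it; your plan of testing with $v$ for the $L^1$ bound and reading the $L^2$ condition off the equation is the right way to finish. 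Two small points to record: the convolution $W(s)=\int_0^s e^{\t-s}F_3(\t)\,d\t$ does lie in $L^2_\mu(\reals^+,H^1_0(\O))$, but the natural proof uses $\mu'\le 0$ (Cauchy--Schwarz plus $\mu(s)\le\mu(\t)$ for $\t\le s$ gives $\norm{W}_\mu\le\norm{F_3}_\mu$), so that $w\in H^1_\mu$ with $w(0)=0$; and in the accretivity step the identity $\innerprod{g(v)-g(\hat v),v-\hat v}=\int_\O(g(v)-g(\hat v))(v-\hat v)\,dx\ge 0$ needs the $H^{-1}\cap L^1$ membership and Lemma 2.6 of \cite{Barbu3}, as in the paper, not monotonicity alone.
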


\begin{proof}
In order to prove Proposition \ref{lemma-1}, it suffices to show that the operator $\mathscr A+\a I$ is $m$-accretive for some positive $\a$. We say an operator
$\mathscr A: \mathcal D(\mathscr A) \subset H \longrightarrow H$ is \emph{accretive} if
$(\mathscr Ax_1-\mathscr Ax_2,x_1-x_2)_H \geq 0$, for all $x_1,x_2 \in \mathcal D(\mathscr A)$,
and it is \emph{$m$-accretive} if, in addition, $\mathscr A+I$ maps $\mathcal D(\mathscr A)$ onto $H$.
It follows from Kato's Theorem (see \cite{Sh} for instance) that, if $\mathscr A+\a I$ is $m$-accretive for some positive $\a$, then for each $U_0 \in \mathcal D(\mathscr A)$ there is a unique global strong solution $U$ of the Cauchy problem (\ref{semigroup}).\\

\noindent
\textbf{Step 1}: We show that $\mathscr{A}+\a I: D(\mathscr A)\subset H\longrightarrow H$ is an accretive operator for some $\a>0$. Let $U=(u,v,w)$, $\hat U=(\hat u,\hat v,\hat w)\in D(\mathscr A)$.  For sake of simplifying the notation in this proof, we use the notation $\innerprod{\cdot,\cdot}$ to denote  the standard duality pairing between
$H^{-1}(\O)$ and $H^{1}_0(\O)$; while $(\cdot,\cdot)$ represents the inner product in $L^2(\O)$.

Then,
\begin{align} \label{e1}
&((\mathscr A+\a I)U-(\mathscr A+\a I)\hat U, U-\hat U)_H=(\mathscr A(U)-\mathscr A(\hat U),U-\hat U)_H+\a\|U-\hat U\|^2_H \notag \\
&=-(\nabla(v-\hat v),\nabla(u-\hat u))-\langle \Delta(u-\hat u),v-\hat v \rangle+\langle g(v)-g(\hat v),v-\hat v \rangle \notag\\
&\;\;\;\;-\left \langle \mathcal L(w-\hat w), v-\hat v \right \rangle
-(f(u)-f(\hat u),v-\hat v) \notag\\
&\;\;\;\;-(v-\hat v,w-\hat w)_{\mu}+(w_s-\hat w_s,w-\hat w)_{\mu}+\a \|U-\hat U\|^2_H.
\end{align}

First, thanks to  (\ref{duality}), we have
\begin{align} \label{e2}
-\left \langle \mathcal L(w-\hat w), v-\hat v \right \rangle=(w-\hat w,v-\hat v)_{\mu}.
\end{align}

Since $U$ and $\hat U\in D(\mathscr A)$, we know $g(v)-g(\hat v)\in H^{-1}(\O)\cap L^1(\O)$. Thus, by the monotonicity of $g$ and Lemma 2.6 in \cite{Barbu3},
one has $\left(g(v)-g(\hat v)\right)\left(v-\hat v\right)\in L^1(\O)$ and
\begin{align} \label{e3}
\innerprod{g(v)-g(\hat v),v-\hat v}
= \int_\O\left(g(v)-g(\hat v)\right)\left(v-\hat v\right)dx\geq 0.
\end{align}

Since $w-\hat w \in  H^1_{\mu}(\reals^+,H^1_0(\O))$, then by virtue of (\ref{inner}),
\begin{align} \label{e4}
(w_s-\hat w_s,    w-\hat w)_{\mu}&=\frac{1}{2}\int_0^{\infty}\frac{d}{ds}\left(\int_{\O}|\nabla(w(s)-\hat w (s))|^2 dx \right)\mu(s)ds \notag\\
&=-\frac{1}{2}\int_0^{\infty}\left(\int_{\O}|\nabla(w(s)-\hat w (s))|^2 dx \right)\mu'(s)ds \geq 0,
\end{align}
where we have used integration by parts and the facts: $\mu(\infty)=0$, $\mu'(s)\leq 0$ and $w(0)=0$.

Since $f$ is globally Lipschitz continuous from $H^1_0(\O)$ into $L^2(\O)$ with Lipschitz constant $L_f$, it follows that
\begin{align} \label{e5}
(f(u)-f(\hat u),v-\hat v)&\leq L_f\norm{\nabla(u-\hat u)}_2\norm{v-\hat v}_2 \notag\\
& \leq \frac{L_f}{2}\left(\norm{\nabla(u-\hat u)}^2_2+\norm{v-\hat v}^2_2\right).
\end{align}

Therefore,  (\ref{e1})-(\ref{e5}) yield
\begin{align} \label{e6}
((\mathscr{A}+kI)U & - (\mathscr{A}+kI)\hat U,  U-\hat U)_H  \notag\\
&\geq -\frac{L_f}{2}\left(\norm{u-\hat u}^2_2+\norm{v-\hat v}^2_2\right)+\a\|U-\hat U\|^2_H  \notag\\ &
\geq (\a-\frac{L_f}{2})\|U-\hat U\|^2_H \geq 0;
\end{align}
provided $\a \geq \frac{L_f}{2}$.\\


\noindent
\textbf{Step 2}: We show that $\mathscr{A}+\l I$ is m-accretive for some $\l>0$. To this end, it suffices to show that the range of $\mathscr{A}+\l I$ is all of $H$, for some $\l>0$ (see for example \cite{Sh}).

Let $(a,b,c)\in H$. We aim to show that there exists $(u,v,w)\in D(\mathscr A)$ such that $(\mathscr A+\l I)(u,v,w)=(a,b,c)$, for some $\l>0$, i.e.,
\begin{align} \label{e9}
\begin{cases}
-v+\l u =a \\
-\Delta u+g(v)- \mathcal L(w)-f(u) +\l v=b\\
-v+w_s +\l w=c.
\end{cases}
\end{align}

Notice that, (\ref{e9}) is equivalent to
\begin{align} \label{e10}
\begin{cases}
-\frac{1}{\l}\Delta v+g(v)- \mathcal L(w)-f(\frac{v+a}{\l}) +\l v=b+\frac{1}{\l}\Delta a\\
-v+w_s +\l w=c.
\end{cases}
\end{align}

Let $X=H^1_0(\O)\times L^2_{\mu}(\reals^+, H_0^1(\O))$ where $X$ is endowed with the natural inner product, i.e., if $U=(v,w), \, \hat U=(\hat v, \hat w)\in X$, then
$$(U, \hat U)_X:= \int_\O \nabla v \cdot \nabla \hat  v \, dx +  (w, \hat w)_{\mu}.$$

Define an operator $T: D(T)\subset X \longrightarrow X'$ by
\begin{align*}
T\begin{pmatrix}
v\\
w
\end{pmatrix}^{tr}=\begin{pmatrix}
-\frac{1}{\l}\Delta v+g(v)- \mathcal L(w)-f(\frac{v+a}{\l}) +\l v\\
-v+w_s +\l w
\end{pmatrix}^{tr}
\end{align*}
where,
$$D(T)=\Big\{(v,w)\in X: g(v)\in H^{-1}(\O)\cap L^1(\O), \;w\in H_{\mu}^1(\reals^+, H_0^1(\O)), \; w(0)=0 \Big\}.$$
It is important to note here  that we consider $L^2_{\mu}(\reals^+,H^1_0(\O))$ as a Hilbert space identified with its own dual, and thus,
$X'=H^{-1}(\O)\times L^2_{\mu}(\reals^+, H_0^1(\O))$.

To justify the surjectivity of $T$, it is sufficient to show that the operator $T$ is coercive and maximal monotone (Corollary 2.2  in \cite{Barbu3}).

We split $T$ as a summation of three operators. First, we define $T_1: X\longrightarrow X'$ by
\begin{align*}
T_1\begin{pmatrix}
v\\
w
\end{pmatrix}^{tr}=\begin{pmatrix}
-\frac{1}{\l}\Delta v- \mathcal L(w)-f(\frac{v+a}{\l}) +\l v\\
-v+\l w
\end{pmatrix}^{tr}.
\end{align*}
The operator $T_2: D(T_2)\subset H^1_0(\O) \longrightarrow H^{-1}(\O)$ is defined by
\begin{align*}
T_2(v)=g(v)
\end{align*}
where $D(T_2)=\{v\in H^1_0(\O): g(v)\in H^{-1}(\O)\cap L^1(\O)\}$. By a result due to Br\'ezis  \cite{Bre}, $T_2$ is the
sub-differential of the convex functional $J: H^1_0(\O)\longrightarrow [0,\infty]$ defined by $J(u)=\int_{\O}j(u)dx$, where $j(s)=\int_0^s g(\t) d\t$. It is well-known that the subdifferential of a proper convex function is maximal monotone, and thus $T_2$ is a maximal monotone operator.

We further define $T_3: D(T_3) \subset L^2_{\mu}(\reals^+,H^1_0(\O)) \longrightarrow L^2_{\mu}(\reals^+,H^1_0(\O))$ by
\begin{align*}
T_3(w)=\partial_sw
\end{align*}
where $D(T_3)=\{w\in H^1_{\mu}(\reals^+,H^1_0(\O)): w(0)=0\}$.
Notice that the monotonicity of   $T_3$ follows from (\ref{e4}). In addition, it is clear that the operator
 $T_3+I$ is surjective. Therefore, $T_3$ is maximal monotone (see Theorem 2.2 in \cite{Barbu3}).

To see $T_1$ is maximal monotone from $X$ to $X'$, it is enough to verify that $T_1$ is monotone and hemicontinuous. For checking the monotonicity of $T_1$, we consider
\begin{align} \label{e11}
&\left \langle T_1\begin{pmatrix}v\\w\end{pmatrix}^{tr}-T_1\begin{pmatrix}\hat v\\ \hat w\end{pmatrix}^{tr},
\begin{pmatrix}v\\w\end{pmatrix}^{tr}-\begin{pmatrix}\hat v\\ \hat w\end{pmatrix}^{tr} \right \rangle_{X'\times X} \notag\\
&=\frac{1}{\l}\norm{\nabla(v-\hat v)}^2_2+(w-\hat w,v-\hat v)_{\mu} -\left(f\Big(\frac{v+a}{\l}\Big)-f\Big(\frac{\hat v+a}{\l}\Big),v-\hat v\right) \notag\\
&+\l\norm{v-\hat v}^2_2
-(v-\hat v,w-\hat w)_{\mu}+\l\norm{w-\hat w}^2_{\mu}\notag\\
&=\frac{1}{\l}\norm{\nabla(v-\hat v)}^2_2+\l\norm{v-\hat v}^2_2-\left(f\Big(\frac{v+a}{\l}\Big)-f\Big(\frac{\hat v+a}{\l}\Big),v-\hat v\right)\notag\\
&+\l\norm{w-\hat w}^2_{\mu}.
\end{align}
Since $f$ is globally Lipschitz continuous from $H^1_0(\O)$ into $L^2(\O)$ with Lipschitz constant $L_f$, one has
\begin{align} \label{e13}
&\left(f\Big(\frac{v+a}{\l}\Big)-f\Big(\frac{\hat v+a}{\l}\Big),v-\hat v\right)\leq
\norm{f\Big(\frac{v+a}{\l}\Big)-f\Big(\frac{\hat v+a}{\l}\Big)}_2\norm{v-\hat v}_2 \notag\\
&\leq \frac{L_f}{\l}\norm{\nabla(v-\hat v)}_2 \norm{v-\hat v}_2
\leq \frac{1}{2\l^2}\norm{\nabla(v-\hat v)}_2^2+\frac{1}{2}L_f^2\norm{v-\hat v}_2^2.
\end{align}
Combining (\ref{e11}) and (\ref{e13}) gives
\begin{align}\label{2.17}
&\left \langle T_1\begin{pmatrix}v\\w\end{pmatrix}^{tr}-T_1\begin{pmatrix}\hat v\\ \hat w\end{pmatrix}^{tr},
\begin{pmatrix}v\\w\end{pmatrix}^{tr}-\begin{pmatrix}\hat v\\ \hat w\end{pmatrix}^{tr} \right \rangle_{X'\times X} \notag\\
&\geq \left(\frac{1}{\l}-\frac{1}{2\l^2}\right)\norm{\nabla(v-\hat v)}_2^2+\left(\l-\frac{1}{2}L_f^2\right)\norm{v-\hat v}_2^2
+\l\norm{w-\hat w}^2_{\mu}.
\end{align}
Thus, it follows from (\ref{2.17}) that  $T_1$ is strongly monotone;  provided $\l>\frac{1}{2}\max\{L_f^2,1\}$. Also, it is easy to see that strong monotonicity implies coercivity of $T_1$.

Next we verify $T_1: X\longrightarrow X'$ is hemicontinuous. Clearly, any linear operator is hemicontinuous. So, we merely consider the nonlinear term $f\left(\frac{v+a}{\l}\right)$. The fact that $f$ is globally Lipschitz from $H_0^1(\O)$ into $L^2(\O)$, trivially implies that $f\left(\frac{v+a}{\l}\right)$ is continuous from $H^1_0(\O)$ into $H^{-1}(\O)$. Hence,  $T_1$ is hemicontinuous, and so,  $T_1$ is maximal monotone.

Now, it is important to note that
\begin{align*}
T\begin{pmatrix}
v\\
w
\end{pmatrix}^{tr}=T_1\begin{pmatrix}
v\\
w
\end{pmatrix}^{tr}
+\begin{pmatrix}
T_2(v)\\
T_3(w)
\end{pmatrix}^{tr},
\end{align*}
where $T_2$ and $T_3$ are both maximal monotone which act on different components of the vector $\begin{pmatrix}v\\w\end{pmatrix}^{tr}$. By Proposition 7.1 in \cite{GR}, it follows that   the mapping $\begin{pmatrix}v\\w\end{pmatrix}^{tr} \mapsto \begin{pmatrix}T_2(v)\\T_3(w) \end{pmatrix}^{tr}$ is maximal monotone from $D(T)$ to $X'$. Moreover, due to the maximal monotonicity of $T_1$ and the fact that the domain of $T_1$ is the entire space $X$, we conclude that $T$ is maximal monotone (Theorem 2.6 in \cite{Barbu3}).

In addition, we know $T$ is coercive, since $T_1$ is coercive and both of $T_2$ and $T_3$ are monotone. Therefore, $T$ is maximal monotone and coercive, which yields the surjectivity of $T$, i.e, there exists $(v,w)\in D(T)$ satisfies (\ref{e10}) for any $(a,b,c)\in H$. By (\ref{e9}), $u=\frac{v+a}{\l}\in H^1_0(\O)$ and $-\Delta u+g(v)- \mathcal L(w)-f(u)=b-\l v\in L^2(\O)$. Consequently,  $(u,v,w)\in D(\mathscr A)$ which concludes the proof of Proposition \ref{lemma-1}.
\end{proof}

\subsection{Locally Lipschitz Source}

In this subsection, we loosen the restriction on the source by allowing $f$ to be locally Lipschitz continuous. More precisely, we have the following result.

\begin{proposition} \label{lemma-2}
Assume $g$ is a continuous and monotone increasing function vanishing at the origin such that $g(s)s\geq a|s|^{m+1}$ for all $|s|\geq 1$, where $a>0$ and $m\geq 1$.
In addition, assume $f: H_0^1(\O)\longrightarrow L^2(\O)$ is locally Lipschitz continuous. Then, system (\ref{semigroup}) has a unique local strong solution $U\in W^{1,\infty}(0,T;H)$, for some $T>0$, provided the initial datum $U_0\in D(\mathscr A)$.
\end{proposition}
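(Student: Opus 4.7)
The strategy is the standard truncation argument used in dealing with locally Lipschitz nonlinearities in a dissipative/accretive framework. Define a truncated source that is globally Lipschitz from $H^1_0(\O)$ into $L^2(\O)$, invoke Proposition \ref{lemma-1} to obtain a global strong solution for the truncated problem, derive an a priori bound showing the solution stays in a ball of fixed radius on some time interval $[0,T]$, and then choose the truncation parameter larger than that radius so the truncated source agrees with $f$ along the trajectory, yielding a local solution of the original problem.

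Concretely, let $\eta \in C^1(\reals)$ be a standard cutoff with $\eta(r)=1$ for $r\leq 1$, $\eta(r)=0$ for $r\geq 2$, and $0\leq \eta \leq 1$. For $K>0$ define
\[
f_K(u) := \eta\!\left(\frac{\norm{\nabla u}_2}{K}\right) f(u), \qquad u \in H^1_0(\O).
\]
Using the local Lipschitz continuity of $f$ from $H^1_0(\O)$ into $L^2(\O)$, together with the boundedness and Lipschitz character of $\eta$, one verifies that $f_K: H^1_0(\O) \longrightarrow L^2(\O)$ is globally Lipschitz (with constant depending on $K$), since outside the ball of radius $2K$ the cutoff is zero, while inside $\{u:\norm{\nabla u}_2\leq 2K\}$ both factors are bounded and Lipschitz. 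Proposition \ref{lemma-1} then supplies, for each initial datum $U_0 = (u_0(0),\partial_t u_0(0), u_0(0)-u_0(-\cdot)) \in D(\mathscr A)$, a unique global strong solution $U_K \in W^{1,\infty}(0,\infty; H)$ of the truncated Cauchy problem $U_t+\mathscr A_K U = 0$, $U(0)=U_0$, where $\mathscr A_K$ is obtained from $\mathscr A$ by replacing $f$ with $f_K$.

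Next I would carry out the a priori estimate. Since $U_K$ is strong, the function $t\mapsto \norm{U_K(t)}_H^2$ is absolutely continuous and
\[
\tfrac{1}{2}\tfrac{d}{dt}\norm{U_K(t)}_H^2 = -(\mathscr A_K U_K, U_K)_H.
\]
Using the computations already performed in Step 1 of the proof of Proposition \ref{lemma-1} (in particular \eqref{e2}--\eqref{e4}, which rely only on the monotonicity of $g$, the identity \eqref{duality}, and the sign of $\mu'$), the $\Delta u$, $g(v)$, $\mathcal L(w)$, and $w_s$ contributions are either nonnegative or cancel. Hence
\[
\tfrac{1}{2}\tfrac{d}{dt}\norm{U_K(t)}_H^2 \leq (f_K(u_K),v_K) \leq \norm{f_K(u_K)}_2\,\norm{v_K}_2,
\]
and since $\norm{f_K(u)}_2 \leq C(1+\norm{\nabla u}_2^p)$ (bounded by a polynomial in $\norm{U_K}_H$ independent of $K$), a Gronwall-type comparison argument against the scalar ODE $y'=C(1+y^{p+1/2})$ yields the existence of $R>0$ and $T>0$, both depending only on $\norm{U_0}_H$ (and on $\norm{u_0(0)}_{H^1_0}$ through the initial energy $E(0)$), such that
\[
\norm{U_K(t)}_H \leq R \quad \text{for all } t\in [0,T], \quad \text{uniformly in } K.
\]

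Choose now $K > R$. Then for every $t\in[0,T]$ one has $\norm{\nabla u_K(t)}_2 \leq R < K$, so $\eta(\norm{\nabla u_K(t)}_2/K)=1$ and consequently $f_K(u_K(t)) = f(u_K(t))$. Thus $U_K\big|_{[0,T]}$ solves the untruncated Cauchy problem \eqref{semigroup} and lies in $W^{1,\infty}(0,T;H)$. Uniqueness on $[0,T]$ is immediate: if $U$ and $\tilde U$ are two such local strong solutions, both are bounded in $H$ on $[0,T]$ by some radius $R'$, hence both solve the truncated system for any $K>R'$, and the accretivity estimate \eqref{e6} (with the local Lipschitz constant of $f$ on the ball of radius $R'$ in place of $L_f$) combined with Gronwall yields $U\equiv \tilde U$.

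The principal delicate point is the derivation of the a priori bound that is uniform in the truncation parameter $K$: one must exploit that $\eta$ is uniformly bounded and that the growth $|f(s)|\leq C(|s|^p+1)$ together with the Sobolev embedding $H^1_0(\O)\hookrightarrow L^{p+1}(\O)$ (valid for $p<6$, cf.\ Assumption \ref{ass}) controls $\norm{f_K(u_K)}_2$ purely in terms of $\norm{U_K}_H$, so that the resulting scalar ODE has a nontrivial local existence time independent of $K$. Once this uniform bound is secured, the remaining steps are essentially formal.
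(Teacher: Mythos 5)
Your overall scheme---truncate the source so that it becomes globally Lipschitz from $H^1_0(\O)$ into $L^2(\O)$, solve globally via Proposition \ref{lemma-1}, derive an a priori bound on a short time interval, and then take the truncation radius larger than that bound so the truncated and original problems coincide, with uniqueness obtained from the truncated problem---is exactly the paper's strategy (the paper truncates by radial retraction, $f_K(u)=f(Ku/\norm{\nabla u}_2)$ for $\norm{\nabla u}_2>K$, rather than by a scalar cutoff factor; that difference is immaterial). The genuine problem is in the step you yourself single out as delicate: the uniform-in-$K$ a priori bound. You estimate the source term by $\norm{f_K(u_K)}_2\norm{v_K}_2$ and claim $\norm{f_K(u)}_2\le C(1+\norm{\nabla u}_2^p)$ with $C$ independent of $K$, justified by the growth $|f(s)|\le C(|s|^p+1)$ and the embedding $H^1_0(\O)\hookrightarrow L^{p+1}(\O)$. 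That justification fails in the supercritical range: $u\in L^6(\O)$ only yields $f(u)\in L^{6/p}(\O)$, and $6/p\ge 2$ requires $p\le 3$, so for $3<p<6$ the $L^{p+1}$ (or $L^{6}$) norm of $u$ does not control $\norm{f(u)}_2$ at all; it is also beside the point, since Proposition \ref{lemma-2} assumes only, abstractly, that $f:H^1_0(\O)\to L^2(\O)$ is locally Lipschitz, with no Nemytskii growth structure available. The repair inside your framework is easy: since $0\le\eta\le1$, one has $\norm{f_K(u)}_2\le\norm{f(u)}_2\le\norm{f(0)}_2+L_r\norm{\nabla u}_2$ whenever $\norm{\nabla u}_2\le r$, where $L_r$ is the local Lipschitz constant of $f$; this $K$-independent, locally bounded (not polynomial) majorant plus a bootstrap/comparison argument gives your $R$ and $T$ depending only on $\norm{U_0}_H$. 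Alternatively, uniformity in $K$ is not even needed: as in the paper, fix $K^2\ge 4(E(0)+1)$ first and let $T$ depend on $K$.

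Beyond that fixable step, note that your energy estimate is genuinely different from the paper's: you pair the source with $v$ in $L^2$ and never use the lower bound $g(s)s\ge a|s|^{m+1}$, whereas the paper uses that $f_K:H^1_0(\O)\to L^{\frac{m+1}{m}}(\O)$ is globally Lipschitz, pairs the source with $u_t$ in the $L^{\frac{m+1}{m}}$--$L^{m+1}$ duality, and absorbs $\int_0^t\norm{v}_{m+1}^{m+1}\,d\t$ into the damping. For the statement of Proposition \ref{lemma-2} either route suffices once your estimate is corrected, but the paper's choice is deliberate: it makes the life span \eqref{def-T} depend only on $E(0)$ and the $L^{\frac{m+1}{m}}$-Lipschitz data of the source, which is precisely what is exploited in the proof of Theorem \ref{thm-exist} (via Proposition \ref{prop-1}) to obtain an existence time independent of $n$ for the approximations $f_n=f\eta_n$, whose $H^1_0\to L^2$ Lipschitz constants blow up with $n$. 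So your variant, once repaired, proves the proposition as stated, but it would not serve as a drop-in replacement for the paper's proof in the subsequent limiting argument.
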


\begin{proof}
We employ a standard truncation of the source. Define
\begin{align*}
f_K(u)=\left\{ \begin{array}{ll}
  f(u) & \mbox{if $\norm{\nabla u}_2 \leq K,$}\\
  f\left(\frac{Ku}{\norm{\nabla u}_2}\right)
  & \mbox{if $\norm{\nabla u}_2> K,$ }\\
                             \end{array}
                         \right.
\end{align*}
where $K$ is a positive constant. With this setting, we consider the following $K$-truncated problem:
\begin{align} \label{K}
U_t+\mathscr A_K U=0
\end{align}
with the same initial condition as in  problem (\ref{semigroup}), where the operator
$\mathscr{A}_K: D(\mathscr A_K)\subset H\longrightarrow H$ is defined by
\[\mathscr A_K\left(U\right)=\begin{pmatrix}
-v \\
-\Delta u+g(v)- \mathcal L(w)-f_K(u) \\
-v+w_s
\end{pmatrix}^{tr}\]
with its domain $D(\mathscr A_K)=D(\mathscr A)$.

Since the truncated source $f_K: H_0^1(\O)\longrightarrow L^2(\O)$ is globally Lipschitz continuous for each $K$ (see \cite{CEL1}), then by Proposition \ref{lemma-1}, the truncated  problem (\ref{K}) has a unique global strong solution $U_K\in W^{1,\infty}(0,T;H)$ for any $T>0$; provided the initial datum $U_0\in D(\mathscr A)$.

For simplifying the notation in the rest of the proof, we shall express $U_K$ as $U$. First, we aim to derive the associated  energy identity for (\ref{K}). Since $U=(u,v,w)$ is a strong solution of (\ref{K}), the following equation holds:
\begin{align} \label{e14}
v_t-\Delta u+g(v)-\mathcal L(w)-f_K(u)=0,\,\,\ \text{ a.e.  } [0,T].
\end{align}
By the regularity of the solution $U$, we can multiply (\ref{e14}) by $v=u_t$ and integrate on $\O \times (0,t)$ where $0<t<T$,  to obtain,
\begin{align} \label{e15}
&\frac{1}{2}(\norm{v(t)}_2^2+\norm{\nabla u(t)}^2_2)
+\int_0^t \int_{\O}g(v)v dx d\t+\int_0^t (w,v)_{\mu} d\t  \notag\\
&=\frac{1}{2}(\norm{v(0)}_2^2+\norm{\nabla u(0)}^2_2)+\int_0^t \int_{\O} f_K(u)v dx d\t,
\end{align}
where (\ref{e15}) hold for any $t>0$, as $T>0$ is arbitrary.

Since $v=w_t+w_s$, we compute
\begin{align} \label{e16}
&\int_0^t (w,v)_{\mu} d\t
=\int_0^t (w,w_t+w_s)_{\mu} d\t \notag\\
&=\int_0^t \int_0^{\infty} \int_{\O} \nabla w(\t,s) \cdot \nabla w_t(\t,s) dx \mu(s)ds d\t  \notag\\
&\hspace{0.4 in}+\int_0^t \int_0^{\infty} \int_{\O} \nabla w(\t,s) \cdot \nabla w_s(\t,s) dx \mu(s)ds d\t \notag\\
&=\frac{1}{2}\int_0^{\infty}\left(\norm{\nabla w(t,s)}_2^2-\norm{\nabla w(0,s)}_2^2 \right)  \mu(s)ds  \notag\\
&\hspace{0.4 in}-\frac{1}{2}\int_0^t \int_0^{\infty} \norm{\nabla w(\t,s)}_2^2 \mu'(s)ds d\t,
\end{align}
where we have used integration by parts with the fact $\mu(\infty)=0$ and $w(x,t,0)=0$.
Therefore,  (\ref{e15}) and (\ref{e16})  yield  the following energy identity:
\begin{align} \label{EI}
E(t) &+\int_0^t \int_{\O}g(v)v dx d\t-\frac{1}{2}\int_0^t \int_0^{\infty} \norm{\nabla w(\t,s)}_2^2 \mu'(s)ds d\t \notag\\
&=E(0)+\int_0^t \int_{\O} f_K(u)v dx d\t,
\end{align}
where the quadratic energy $E(t)$ is defined in (\ref{energy}).
Since $\mu'(s)\leq 0$, then for all $s>0$, we have
\begin{align} \label{energy-ineq}
E(t)+\int_0^t \int_{\O}g(v)v dx d\t \leq E(0)+\int_0^t \int_{\O} f_K(u)v dx d\t.
\end{align}

Let us note here that, straightforward calculation shows $f_K: H^1_0(\O) \longrightarrow L^{\frac{m+1}{m}}(\O)$ is globally Lipschitz with Lipschitz constant $L_K$ (see \cite{CEL1}).
Thus, we estimate term due to the source  on the right-hand side of the energy inequality (\ref{energy-ineq}) as follows:
\begin{align} \label{e17}
&\int_0^t \int_{\O} f_K(u)v dx d\t \leq  \int_0^t \norm{f_K(u)}_{\frac{m+1}{m}}\norm{v}_{m+1}d\t \notag\\
&\leq \e \int_0^t \norm{v}_{m+1}^{m+1} d\t+C_{\e}\int_0^t \norm{f_K(u)}_{\frac{m+1}{m}}^{\frac{m+1}{m}} d\t \notag\\
&\leq \e \int_0^t \norm{v}_{m+1}^{m+1} d\t+C_{\e}L_K^{\frac{m+1}{m}}\int_0^t \norm{\nabla u}_2^{\frac{m+1}{m}} d\t+C_{\e}t|f(0)|^{\frac{m+1}{m}}|\O|.
\end{align}

By recalling the assumption on the damping that  $g(s)s\geq a|s|^{m+1}$ for all $|s|\geq 1$, we have
\begin{align} \label{e18}
\int_0^t \int_{\O}g(v)v dx d\t \geq a \int_0^t \norm{v}_{m+1}^{m+1} d\t-at|\O|.
\end{align}

Thus,  (\ref{energy-ineq})-(\ref{e18}) and the fact  $\frac{m+1}{m}\leq 2$ yield,
\begin{align} \label{e20}
 &E(t)+a\int_0^t \norm{v}_{m+1}^{m+1} d\t \notag\\
 &\leq E(0)+\e\int_0^t \norm{v}_{m+1}^{m+1} d\t+C_{\e}L_K^{\frac{m+1}{m}}\int_0^t \norm{\nabla u}_{2}^{\frac{m+1}{m}}d\t+at|\O|+C_{\e}t|f(0)|^{\frac{m+1}{m}}|\O| \notag\\
&\leq E(0)+\e\int_0^t \norm{v}_{m+1}^{m+1} d\t+2C_{\e}L_K^{\frac{m+1}{m}}\int_0^t E(t) d\t+t C_0
\end{align}
where $C_0$ depends on $\e$, $L_K$, $f(0)$, $|\O|$ and $m$. By choosing $\e\leq a$ one has,
\begin{align*}
E(t)\leq E(0)+C_0 T+C(L_K)\int_0^t E(\t)d\t, \text{\;\;for all\;\;} t\in [0,T],
\end{align*}
where $C(L_K)=2C_{\e}L_K^{\frac{m+1}{m}}$, and $T$ will be chosen below.
By  Gronwall's inequality, one has
\begin{align*}
E(t)\leq (E(0)+C_0 T)e^{C(L_K)t}, \text{\;\;for all\;\;} t\in [0,T].
\end{align*}
If we select
\begin{align} \label{def-T}
T=\min\left\{\frac{1}{C_0},\frac{1}{C(L_K)}\log 2 \right\},
\end{align}
then
\begin{align} \label{e19}
E(t)\leq 2(E(0)+1)\leq K^2/2, \text{\;\;for all\;\;} t\in [0,T];
\end{align}
provided we choose
\begin{align} \label{bound-K}
K^2\geq 4(E(0)+1).
\end{align}
We note here that  (\ref{e19}) shows $\norm{\nabla u}_2\leq K$, for all
$t\in [0,T]$, and thus, by the definition of $f_K$, we see that $f_K(u)=f(u)$ on $[0,T]$. By the uniqueness of solutions, the solution of the truncated problem (\ref{K}) coincides with the solution of the original problem (\ref{semigroup}) for $t\in [0,T]$. This completes the proof of Proposition \ref{lemma-2}.
\end{proof}

\begin{remark}
In Lemma \ref{lemma-2}, the local existence time $T$ depends on $L_K$, which is the locally Lipschitz constant of $f: H^1_0(\O)\longrightarrow L^2(\O)$; nevertheless, $T$ is independent of the locally Lipschitz constant of $f: H^1_0(\O)\longrightarrow L^{\frac{m+1}{m}}(\O)$. This observation is crucial for the next step.
\end{remark}

\subsection{Completion of the Proof for the Local Existence}

To extend the existence result in Proposition \ref{lemma-2}  to the situation where the source $f(u)$ is not necessary locally Lipschitz from $H_0^1(\O)$ into $ L^2(\O)$, we employ the following truncation of the source (first used in \cite{Radu}). Namely, put:
\begin{align} \label{cutoff}
f_n(u)=f(u)\eta_n(u)
\end{align}
where $\eta_n\in C^{\infty}_0(\reals)$ is a smooth cutoff function such that: $0\leq \eta_n \leq 1$; $\eta_n(u)=1$ if $|u|\leq n$; $\eta_n(u)=0$ if $|u|\geq 2n$;
and $|\eta'(u)|\leq C/n$.

The following result is already known in \cite{BL1,RW}.
\begin{proposition} \label{prop-1}
 Suppose $m\geq 1$, $0< \e<1$. Assume $f: \reals \longrightarrow \reals $ such that  $|f'(s)|\leq C(|s|^{p-1}+1)$, where  $p\frac{m+1}{m}\leq \frac{6}{1+2\e}$. Let $f_n$ be defined in (\ref{cutoff}). Then,
\begin{itemize}
\item $f_n: H^1_0(\O)\longrightarrow L^2(\O)$ is globally Lipschitz continuous with Lipschitz constant depending on $n$.
\item
$f_n: H^{1-\e}_0(\O)\longrightarrow L^{\frac{m+1}{m}}(\O)$ is locally Lipschitz continuous with a local Lipschitz constant independent of $n$.
\end{itemize}
\end{proposition}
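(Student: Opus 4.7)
My plan is to treat the two claims separately. The first is a standard consequence of the compact support of $\eta_n$, which turns the cutoff $f_n$ into a globally bounded, globally Lipschitz function of one real variable (with a constant depending on $n$). The second claim, which carries the main content of the proposition, rests on the observation that although $f_n$ depends on $n$, its derivative is bounded by a fixed power of $s$ uniformly in $n$; combined with H\"older's inequality and the Sobolev embedding $H^{1-\e}_0(\O)\hookrightarrow L^{6/(1+2\e)}(\O)$ (valid in 3D), this delivers a local Lipschitz bound independent of $n$.

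For part (1), I would note that $f_n\in C^1(\reals)$ is supported in $[-2n,2n]$, hence bounded, and that both terms in $(f_n)'(s)=f'(s)\eta_n(s)+f(s)\eta'_n(s)$ are bounded in $s$ by constants depending only on $n$. Thus $f_n\colon\reals\to\reals$ is globally Lipschitz with some constant $L_n$. Since $|f_n(u(x))-f_n(v(x))|\leq L_n|u(x)-v(x)|$ pointwise, the Nemytski operator satisfies $\norm{f_n(u)-f_n(v)}_2\leq L_n\norm{u-v}_2\leq CL_n\norm{\nabla(u-v)}_2$, which is the asserted global Lipschitz property from $H^1_0(\O)$ into $L^2(\O)$.

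For part (2), the crucial step is the pointwise estimate
$$|(f_n)'(s)|\leq C(|s|^{p-1}+1), \qquad s\in\reals,$$
with $C$ independent of $n$. The term $f'(s)\eta_n(s)$ is controlled directly by the hypothesis $|f'(s)|\leq C(|s|^{p-1}+1)$. For $f(s)\eta'_n(s)$, one first notes $|f(s)|\leq C(1+|s|^p)$ (from integrating the bound on $f'$), then uses $|\eta'_n(s)|\leq C/n$ together with the support condition $n\leq|s|\leq 2n$, which gives $1/n\leq 2/|s|$ on $\operatorname{supp}\eta'_n$; this yields $|f(s)\eta'_n(s)|\leq C(1+|s|^{p-1})$ with a constant independent of $n$.

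With this uniform derivative bound, the fundamental theorem of calculus gives the pointwise estimate
$$|f_n(u)-f_n(v)|\leq C\bigl(|u|^{p-1}+|v|^{p-1}+1\bigr)|u-v|.$$
Set $\sigma=(m+1)/m$ and $q_0=p\sigma$. Raising to the power $\sigma$, integrating over $\O$, and applying H\"older's inequality with exponents $a$ and $b$ chosen so that $\sigma b=q_0$ (and one checks that then $(p-1)\sigma a=q_0$ as well) gives
$$\norm{f_n(u)-f_n(v)}_\sigma\leq C\bigl(1+\norm{u}_{q_0}^{p-1}+\norm{v}_{q_0}^{p-1}\bigr)\norm{u-v}_{q_0}.$$
Since the hypothesis yields $q_0\leq q:=6/(1+2\e)$ and $\O$ is bounded, $\norm{\cdot}_{q_0}\leq C\norm{\cdot}_q$, and the Sobolev embedding $H^{1-\e}_0(\O)\hookrightarrow L^q(\O)$ converts all $L^{q_0}$ norms into $H^{1-\e}_0$ norms, producing a local Lipschitz constant depending on the $H^{1-\e}_0(\O)$ radius but not on $n$. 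The main (and only real) obstacle is bookkeeping the H\"older exponents so that the same exponent $q_0$ simultaneously controls $u$, $v$, and $u-v$; this is an algebraic verification that critically uses the hypothesis $p(m+1)/m\leq 6/(1+2\e)$.
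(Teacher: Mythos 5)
Your proof is correct. The paper itself gives no proof of Proposition \ref{prop-1} --- it only cites \cite{BL1,RW} --- and your argument is precisely the standard one from those references: part (1) from the compact support of $\eta_n$, and part (2) from the uniform-in-$n$ derivative bound $|(f_n)'(s)|\leq C(|s|^{p-1}+1)$ (using that on $\operatorname{supp}\eta_n'$ one has $|s|\geq n\geq 1$ and $1/n\leq 2/|s|$), followed by H\"older with the conjugate pair $\bigl(p,\,p/(p-1)\bigr)$, which indeed makes the single exponent $q_0=p\frac{m+1}{m}$ control $u$, $v$, and $u-v$, and finally the embedding $H^{1-\e}_0(\O)\hookrightarrow L^{6/(1+2\e)}(\O)$ together with $q_0\leq 6/(1+2\e)$ on the bounded domain. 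The only cosmetic point is the degenerate case $p=1$, where the H\"older pair becomes $(1,\infty)$; there $f$ is globally Lipschitz and the estimate is immediate, so nothing is lost.
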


We are now ready to prove the local existence statement  in Theorem \ref{thm-exist}.

\begin{proof}
By using the truncated source $f_n$ defined in (\ref{cutoff}), we define the nonlinear operator
\[\mathscr A_n \left(U\right)=\begin{pmatrix}
-v \\
-\Delta u+g(v)- \mathcal L(w)-f_n(u) \\
-v+w_s
\end{pmatrix}\]
with its domain
\begin{align*}
D(\mathscr A_n)=\{&(u,v,w)\in H^1_0(\O)\times H^1_0(\O) \times H^1_{\mu}(\reals^+,H^1_0(\O)):
g(v)\in H^{-1}(\O)\cap L^1(\O), \notag\\
&-\Delta u+g(v)-\mathcal L(w)-f_n(u) \in L^2(\O), \; w(0)=0\}.
\end{align*}
By Proposition \ref{prop-1}, $f_n(u)\in L^2(\O)$ for all $u\in H^1_0(\O)$, so $D(\mathscr A_n)$ is uniform for all $n$.
For the initial data $u_0(x,t)\in L^2_{\mu}(\reals^-,H^1_0(\O))$ satisfying Assumption \ref{ass}, there exists  $u_0^n(x,t) \in H^1_{\mu}(\reals^-, C^2_0(\O))$, $n\in \naturals$, such that $u_0^n(x,t)\longrightarrow u_0(x,t)$ in
$L^2_{\mu}(\reals^-,H^1_0(\O))$ with $u_0^n(x,0)\longrightarrow u_0(x,0)$ in $H^1_0(\O)$ and $v_0^n(x,0)\longrightarrow v_0(x,0)$ in $L^2(\O)$, where $v_0^n=\frac{d}{dt}u_0^n$ and $v_0=\frac{d}{dt}u_0$.
Put $w_0^n(x,s)=u_0^n(x,0)-u_0^n(x,-s)$. Notice, $U_0^n:=(u_0^n(x,0),v_0^n(x,0),w_0^n(x,s))\in D(\mathscr A_n)$, for every $n\in \naturals$. Therefore, by Proposition \ref{lemma-2} and Proposition \ref{prop-1}, the approximate system
\begin{align} \label{approx}
U_t+\mathscr A_n U=0
\end{align}
with the initial data $U_0^n$ has a unique local strong solution
$U_n=(u_n,v_n,w_n)\in W^{1,\infty}(0,T;H)$. Thanks to  Proposition \ref{prop-1}, the life span $T$ of each solution $U_n$, given in (\ref{def-T}), is independent of $n$, since the local Lipschitz constant of the mapping $f_n: H^1(\O) \longrightarrow L^{\frac{m+1}{m}}(\O)$ is independent of $n$.
Also, we known that $T$ depends on $K$, where $K^2\geq 4(E(0)+1)$; nonetheless, since $E_n(0)\longrightarrow E(0)$, we can choose $K$ sufficiently large so that $K$ is independent of $n$. Now, by (\ref{e19}) one has $E_n(t)\leq K^2/2$, which implies the uniform boundedness of $\norm{U_n(t)}_H$ on $[0,T]$. More precisely, we have
\begin{align} \label{unibound}
\norm{U_n(t)}^2_H=\norm{\nabla u_n(t)}_2^2+\norm{u_n'(t)}_2^2+\int_0^{\infty}\norm{\nabla w_n(t,s)}_2^2 \mu(s)ds\leq K^2
\end{align}
for all $t\in [0,T]$ and all $n\in \naturals$.
By choosing $\e\leq a/2$ in (\ref{e20}) and by the fact $E_n(t)$ is uniformly  bounded on $[0,T]$, one has
\begin{align} \label{e-21}
\int_0^T \norm{u_n'}_{m+1}^{m+1} dt \leq C_K
\end{align}
for some constant $C_K>0$ depending on $K$. In addition, by Remark \ref{remark1}, one has $w_n(x,t,s)=u_n(x,t)-u_n(x,t-s)$, for all $t$, $s\geq 0$.

It follows from (\ref{unibound}) and (\ref{e-21}) that there exists
$U=(u,v,w)\in L^{\infty}(0,T;H)$ such that, on a subsequence,
\begin{align} \label{weak-conv}
U_n \longrightarrow U \text{\;\;weak}^* \text{ in \;} L^{\infty}(0,T;H)
\end{align}
and
\begin{align} \label{weak-conv-2}
u_n' \longrightarrow u' \text{\;\;weakly in\;\;} L^{m+1}(\O\times (0,T)).
\end{align}
Also, it is straightforward to show that $v=u'$ and $w(x,t,s)=u(x,t)-u(x,t-s)$
for a.e. $t,s\geq 0$.

By virtue of  (\ref{unibound}) and (\ref{weak-conv}), we infer
\begin{align} \label{bound}
E(t)=\frac{1}{2}\left(\norm{\nabla u(t)}_2^2+\norm{u'(t)}_2^2+\int_0^{\infty}\norm{\nabla w(t,s)}_2^2 \mu(s)ds\right)&=\frac{1}{2} \norm{U(t)}_H^2 \notag \\ & \leq \frac{K^2}{2}
\end{align}
for all $t\in [0,T]$. Similarly, from (\ref{e-21}) and (\ref{weak-conv-2}), it follows that
\begin{align} \label{bound-2}
\int_0^T \norm{u'}_{m+1}^{m+1} dt \leq C_K.
\end{align}

Furthermore, by using (\ref{weak-conv}) and Aubin's compactness theorem (see for instance \cite{TE}), we infer
\begin{align} \label{strong-conv}
u_n \longrightarrow u \text{\;\;strongly in\;\;} L^{\infty}(0,T;H^{1-\e}(\O)),
\end{align}
for $0<\e<1$.
Since $U_n\in D(\mathscr A_n)$ is a strong solution of (\ref{approx}), the following variational formula holds:
\begin{align} \label{var}
&(u_n'(t),\phi(t))_{\O}-(u_n'(0),\phi(0))_{\O}-\int_0^t \int_{\O}u_n'(\t)\phi'(\t)dx d\t\notag\\
&+\int_0^t \int_{\O} \nabla u_n(\t) \cdot \nabla \phi(\t) dx d\t+\int_0^t \int_{\O} g(u_n'(\t))\phi(\t) dx d\t\notag\\
&+\int_0^t \int_0^{\infty} \int_{\O} \nabla w_n(\t,s) \cdot \nabla \phi(\t) dx \mu(s) ds d\t
=\int_0^t \int_{\O} f_n(u_n(\t)) \phi(\t) dx d\t.
\end{align}
for all $\phi \in C([0,T];H^1_0(\O))\cap L^{m+1}(\O \times (0,T))$ with $\phi_t\in C([0,T];L^2(\O))$ and for a.e. $t\in [0,T]$.

Now, we fix an arbitrary $t\in [0,T]$, and show the convergence of nonlinear terms in (\ref{var}).

We shall first show:
\begin{align} \label{conv-f}
\lim_{n\longrightarrow \infty} \int_0^t \int_{\O}f_n(u_n)\phi dx d\t=\int_0^t \int_{\O}f(u)\phi dx d\t,
\end{align}
for all $\phi\in C([0,T];H^1_0(\O))\cap L^{m+1}(\O \times (0,T))$ and a.e. $t\in[0,T]$.
Indeed, we have
\begin{align} \label{e21}
&\left|\int_0^t \int_{\O}(f_n(u_n)-f(u))\phi dx d\t\right| \notag\\
&\leq \int_0^t \int_{\O}|f_n(u_n)-f_n(u)||\phi| dx d\t+
\int_0^t \int_{\O}|f_n(u)-f(u)||\phi| dx d\t.
\end{align}

We shall estimate each term on the right hand side of (\ref{e21}).  By recalling  Proposition \ref{prop-1},  we know that  $f_n$ is locally Lipschitz continuous from
$H^{1-\e}(\O)\longrightarrow L^{\frac{m+1}{m}}(\O)$ with the Lipschitz constant independent of $n$. Thus,
\begin{align} \label{e22}
&\int_0^t \int_{\O}|f_n(u_n)-f_n(u)||\phi| dx d\t  \notag\\
&\leq \left(\int_0^t \int_{\O}|f_n(u_n)-f_n(u)|^{\frac{m+1}{m}}dx d\t\right)^{\frac{m}{m+1}}
\left(\int_0^t \int_{\O}|\phi|^{m+1}dx d\t\right)^{\frac{1}{m+1}}\notag\\
&\leq C(K) \norm{\phi}_{L^{m+1}(\O \times (0,T))} \left(\int_0^t \norm{u_n-u}_{H^{1-\e}(\O)}^{\frac{m+1}{m}} d\t\right)^{\frac{m}{m+1}}\longrightarrow 0,
\end{align}
where we have used the strong convergence (\ref{strong-conv}).

The second term on the right hand side of (\ref{e21}) is handled as follows.
\begin{align} \label{e23}
&\int_0^t \int_{\O}|f_n(u)-f(u)||\phi| dx d\t\notag\\
&\leq \left(\int_0^t \int_{\O}|f_n(u)-f(u)|^{\frac{m+1}{m}} dx d\t\right)^{\frac{m}{m+1}}
\left(\int_0^t \int_{\O}|\phi|^{m+1}dx d\t\right)^{\frac{1}{m+1}}\notag\\
&\leq \norm{\phi}_{L^{m+1}(\O \times (0,T))} \left(\int_0^t \int_{\O}|f(u)|^{\frac{m+1}{m}}|\eta_n(u)-1|^{\frac{m+1}{m}} dx d\t\right)^{\frac{m}{m+1}}.
\end{align}
Thanks to  the assumptions $|f(u)|\leq C(|u|^p+1)$, $p\frac{m+1}{m}<6$,  and the Sobolev imbedding  $H^1_0(\O)\hookrightarrow L^6(\O)$, it can be easily shown that $f(u)\in L^{\frac{m+1}{m}}(\O \times (0,T))$, for each $u\in H^1_0(\O)$. Also, notice $\eta_n(u(x)) \longrightarrow 1$ a.e. in $\O$. Thus, by the Lebesgue Dominated Convergence Theorem, it follows that the right hand side of (\ref{e23}) converges to zero, and along with (\ref{e22}), we conclude the right hand side of (\ref{e21}) is also convergent to zero. Therefore,  (\ref{conv-f}) follows.

In order to deal with the term due to damping  in (\ref{var}), we recall the assumption $g(s)s\leq |s|^{m+1}$ for all $|s|\geq 1$, then by (\ref{e-21}) one has $g(u_n')$ is uniformly bounded in the space $L^{\frac{m+1}{m}}(\O \times (0,t))$. Therefore, there exists $g^*\in L^{\frac{m+1}{m}}(\O \times (0,t))$ such that,
on a subsequence
\begin{align} \label{g-conv}
g(u_n')\longrightarrow g^* \text{\;\;weakly in\;\;} L^{\frac{m+1}{m}}(\O \times (0,t)).
\end{align}
We aim to show $g^*=g(u')$. To accomplish this assertion, we consider two  solutions $U_n$ and $U_j$ of the approximate problem (\ref{approx}) corresponding to the parameters $n$ and $j$, respectively. By denoting $\tilde U=U_n-U_j$, then the following energy inequality holds:
\begin{align} \label{e24}
&\tilde E(t)
+\int_0^t \int_{\O} (g(u_n')-g(u_j'))\tilde u' dx d\t\notag\\
&\leq \tilde E(0)
+\int_0^t \int_{\O} |f_n(u_n)-f_j(u_j)| |\tilde u'|dx d\t,
\end{align}
where $\tilde E(t)=\frac{1}{2}\left(\norm{\tilde u'(t)}_2^2+\norm{\nabla \tilde u(t)}_2^2+\int_0^{\infty}\norm{\nabla \tilde w(t,s)}_2^2\mu(s)ds\right)$.

Let us show first that
 $\int_0^t \int_{\O} |f_n(u_n)-f_j(u_j)|\tilde u' dx d\t\longrightarrow 0$ as $n$, $j\longrightarrow \infty$. Indeed,
\begin{align} \label{e25}
&\int_0^t \int_{\O} |f_n(u_n)-f_j(u_j)||\tilde u'| dx d\t \notag\\
&\leq \int_0^t \int_{\O} |f_n(u_n)-f_n(u)||\tilde u'| dx d\t+\int_0^t \int_{\O} |f_n(u)-f(u)||\tilde u'| dx d\t \notag\\
&+\int_0^t \int_{\O} |f(u)-f_j(u)||\tilde u'| dx d\t+\int_0^t \int_{\O} |f_j(u)-f_j(u_j)||\tilde u'| dx d\t.
\end{align}
By replacing $\phi$ by $\tilde u'$ in (\ref{e22}) and (\ref{e23}), we conclude  that the right hand side of (\ref{e25}) converges to zero, and along with the fact $U^n_0\longrightarrow U_0$ in $H$, it follows that the right hand side of (\ref{e24}) converges to zero. Thus,
\begin{align} \label{e26}
\int_0^t (g(u_n')-g(u_j'))(u_n'-u_j') dx d\t \longrightarrow 0
\end{align}
as $n$, $j\longrightarrow \infty$. Note, since the function $g$ is increasing, it is straightforward to show the operator $g(\cdot): L^{m+1}(\O \times (0,t))\longrightarrow L^{\frac{m+1}{m}}(\O \times (0,t))$ is monotone and hemi-continuous, which implies it is maximal monotone. Thus, with this in hand, it follows from (\ref{g-conv}) and (\ref{e26}) that (see Lemma 2.3 in \cite{Barbu3}), $g^*=g(u')$. Hence,
\begin{align} \label{conv-g}
g(u_n')\longrightarrow g(u') \text{\;\;weakly in\;\;} L^{\frac{m+1}{m}}(\O \times (0,t)).
\end{align}

Now, by (\ref{weak-conv}), (\ref{conv-f}) and (\ref{conv-g}), we can pass to limit on (\ref{var}), and conclude that (\ref{weak}) holds.

It remains to show the continuity of the solution, and verify that $u$ satisfies   the initial condition. Indeed, since $g$ is a increasing function, then (\ref{e24}) yields
\begin{align} \label{e27}
\tilde E(t)\leq \tilde E(0)+\int_0^T \int_{\O} |f_n(u_n)-f_j(u_j)| |\tilde u'|dx d\t,
\text{\;\;\;for all\;\;\;} t\in [0,T].
\end{align}
Notice that  the right hand side of (\ref{e27}) does not depend on the value of $t$ and it does converge to zero as $n$, $j\longrightarrow \infty$. Therefore, it follows that
\begin{align} \label{e28}
u_n \longrightarrow u \text{\;\;in\;\;} H^1_0(\O) \text{\;\;and\;\;} u'_n \longrightarrow u' \text{\;\;in\;\;} L^2(\O),  \text{\;\;uniformly on\;\;} [0,T].
\end{align}
Since $u_n\in W^{1,\infty}([0,T];H^1_0(\O))$ and $u_n'\in W^{1,\infty}([0,T];L^2(\O))$, then the uniform convergence in (\ref{e28})  implies that
$u\in C([0,T];H^1_0(\O))$ and $u'\in C([0,T];L^2(\O))$. In addition, (\ref{e28}) shows
$u_n(x,0)\longrightarrow u(x,0)$ in $H^1_0(\O)$, and since  $u^n_0(x,0)\longrightarrow u_0(x,0)$ in $H^1_0(\O)$, then
 $u(x,0)=u_0(x,0)\in H^1(\O)$. Also, since $u(x,t)=u_0(x,t)$ for $t<0$, it follows that $u(x,t)=u_0(x,t)$ for all $t\leq 0$. This completes the proof of the local  existence statement in Theorem \ref{thm-exist}.
\end{proof}

\begin{remark}\label{rem-2.6}
It can be easily shown, the weak solutions obtained in the proof above, satisfy the following energy \textbf{inequality}:
\begin{align}\label{enrgy-ineq}
E(t)+\int_0^t \int_{\O}g(u_t)u_t dx d\tau\leq E(0)+\int_0^t \int_{\O} f(u)u_t dx d\tau.
\end{align}
In the next section, we shall prove all weak solutions of (\ref{1.1}) in the sense of Definition \ref{def-weak}, satisfy the {\bf energy identity} (\ref{EI-0}) and we will use this fact to justify the uniqueness of solutions.
\end{remark}

\bigskip

\section{Uniqueness of weak solutions}
\subsection{Energy Identity}

In order to prove the uniqueness of weak solutions, we shall justify the energy identity (\ref{EI-0}) rigorously. Notice that the energy identity can be derived formally by testing the equation (\ref{1.1}) by $u_t$, however, such calculation is not rigorous, due to the fact that $u_t$ is not sufficiently regular to be the test function in as required in Definition \ref{def-weak}. To resolve this issue, we employ the operator $T_{\epsilon}:=(I-\epsilon \Delta)^{-1}$ to smooth functions in space.
Some important properties of $T_{\epsilon}$ can be found in the Appendix.
Denote $u_{\epsilon}=T_{\epsilon}u$.

We prove the energy identity (\ref{EI-0}) as follows.
\begin{proof}
Act the regularizing operator $T_{\epsilon}$ on every term of the equation and multiply by $u'_{\epsilon}$. After integrating in space and time, we obtain
\begin{align}  \label{Gal-1}
&\int_0^t \int_{\O}  u''_{\epsilon}(\t) \cdot  u_{\epsilon}'(\t) dx d\t +\int_0^t \int_{\O} \nabla u_{\epsilon}(\t) \cdot \nabla u_{\epsilon}'(\t) dx d\t \notag\\
&+\int_0^t \int_0^{\infty} \int_{\O} \nabla (u_{\epsilon}(\t)-u_{\epsilon}(\t-s)) \cdot \nabla u'_{\epsilon}(\t) dx \mu(s) ds d\t \notag\\
&+\int_0^t \int_{\O} T_{\epsilon}(g(u'(\t))) u_{\epsilon}'(\t) dx d\t
=\int_0^t \int_{\O} T_{\epsilon} (f(u(\t))) u_{\epsilon}'(\t) dx d\t.
\end{align}

Since $u\in H^1_0(\O)$ and $u' \in L^2(\Omega)$, by means of Proposition \ref{H1conv}, we have $u_{\epsilon}\rightarrow u$ in $H^1_0(\O)$ and $u'_{\epsilon}\rightarrow u'$ in $L^2(\O)$.
Thus, the first two integrals on the left-hand side of (\ref{Gal-1}) converges to $$\frac{1}{2}(\|u'(t)\|_{2}^2-\|u'(0)\|_2^2+\|\nabla u(t)\|_{2}^2-\|\nabla u(0)\|^2_{2}).$$

Recall $u'\in L^{m+1}(\O)$, $g(u')\in L^{\frac{m+1}{m}}(\O)$, $m\geq 1$. By Proposition \ref{norm-conveg}, one has $\|u'_{\epsilon}\|_{m+1}\leq \|u'\|_{m+1}$ and  $u'_{\epsilon}\rightarrow u'$ in $L^{m+1}(\Omega)$, as well as
$\|T_{\epsilon}(g(u'))\|_{\frac{m+1}{m}}\leq \|g(u')\|_{\frac{m+1}{m}}$ and
 $T_{\epsilon}(g(u'))\rightarrow g(u')$ in $L^{\frac{m+1}{m}}(\Omega)$. Consequently, we can pass to the limit for the damping term in (\ref{Gal-1}) by using the Lebesgue Dominated Convergence Theorem. This shows
 \begin{align*}
 \lim_{\epsilon\rightarrow 0} \int_0^t \int_{\O} T_{\epsilon}(g(u'(\t))) u_{\epsilon}'(\t) dx d\t=\int_0^t \int_{\O} g(u'(\t)) u'(\t) dx d\t.
 \end{align*}
 On the other hand, $u\in H^1(\O)$ implies $f(u)\in L^{6/p}(\O)$ due to the growth rate of $f$ is $p$. With the assumption $p\frac{m+1}{m}<6$, we have $f(u)\in L^{\frac{m+1}{m}}(\O)$, and therefore similar as the damping term, we can prove the convergence of the source term in (\ref{Gal-1}). That is
 \begin{align*}
 \lim_{\epsilon\rightarrow 0}\int_0^t \int_{\O} T_{\epsilon} (f(u(\t))) u_{\epsilon}'(\t) dx d\t=\int_0^t \int_{\O} f(u(\t)) u'(\t) dx d\t.
 \end{align*}

It remains to treat the memory term in (\ref{Gal-1}). We split it as
\begin{align} \label{Gal-2}
&\int_0^t \int_0^{\infty} \int_{\O} \nabla (u_{\epsilon}(\t)-u_{\epsilon}(\t-s)) \cdot \nabla u'_{\epsilon}(\t) dx \mu(s) ds d\t \notag\\
&=\int_0^t \int_0^{\infty} \int_{\O} \nabla (u_{\epsilon}(\t)-u_{\epsilon}(\t-s)) \cdot \nabla (u'_{\epsilon}(\t)-u'_{\epsilon}(\t-s)) dx \mu(s) ds d\t \notag\\
&\hspace{0.2 in}+\int_0^t \int_0^{\infty} \int_{\O} \nabla (u_{\epsilon}(\t)-u_{\epsilon}(\t-s)) \cdot \nabla u'_{\epsilon}(\t-s) dx \mu(s) ds d\t.
\end{align}
Notice, by virtue of Proposition \ref{H1conv}, the first integral on the right-hand side of (\ref{Gal-2}) converges to
\begin{align*}
\frac{1}{2}\int_0^{\infty} \left(\norm{\nabla(u(t)-u(t-s))}_2^2-\norm{\nabla(u(0)-u(-s))}_2^2 \right) \mu(s) ds,
\end{align*}
where we have used the assumption $u_0(x,t)\in L^2_{\mu}(\mathbb R^-, H^1_0(\Omega))$.
Next, we consider the second term on the right-hand side of (\ref{Gal-2}):
\begin{align} \label{Gal-4}
 &\int_0^t \int_0^{\infty} \int_{\O} \nabla (u_{\epsilon}(\t)-u_{\epsilon}(\t-s)) \cdot \nabla \partial_{\t} u_{\epsilon}(\t-s) dx \mu(s) ds d\t \notag\\
 &= \int_0^t \int_0^{\infty} \int_{\O} \nabla (u_{\epsilon}(\t)-u_{\epsilon}(\t-s)) \cdot \nabla \partial_{s} (u_{\epsilon}(\tau)-u_{\epsilon}(\t-s)) dx \mu(s) ds d\t \notag\\
 &=\frac{1}{2}\int_0^t \int_0^{\infty} \|\nabla(u_{\epsilon}(\t)-u_{\epsilon}(\t-s))\|_2^2 (-\mu'(s)) ds d\tau,
 \end{align}
 where we performed the integration by parts with respect to the time variable $s$ and used the assumption $\mu(\infty)=0$. Notice that (\ref{Gal-4}) is non-negative due to $\mu'(s)\leq 0$.
 In order to see that the limit of (\ref{Gal-4}) exists as $\e\rightarrow 0$, we employ (\ref{Gal-1})- (\ref{Gal-4} to get:
 \begin{align} \label{Gal-5}
 0&\leq \lim_{\epsilon\rightarrow 0}\frac{1}{2}\int_0^t \int_0^{\infty} \|\nabla(u_{\epsilon}(\t)-u_{\epsilon}(\t-s))\|_2^2 (-\mu'(s)) ds d\tau  \notag\\
 &=-\lim_{\epsilon\rightarrow 0}\int_0^t \int_0^{\infty} \int_{\O} \nabla (u_{\epsilon}(\t)-u_{\epsilon}(\t-s)) \cdot \nabla (u'_{\epsilon}(\t)-u'_{\epsilon}(\t-s)) dx \mu(s) ds d\t \notag\\
 &-\lim_{\epsilon\rightarrow 0}\int_0^t \int_{\O}  u''_{\epsilon}(\t) \cdot  u_{\epsilon}'(\t) dx d\t -\lim_{\epsilon\rightarrow 0}\int_0^t \int_{\O} \nabla u_{\epsilon}(\t) \cdot \nabla u_{\epsilon}'(\t) dx d\t  \notag\\
 &-\lim_{\epsilon\rightarrow 0}\int_0^t \int_{\O} T_{\epsilon}(g(u'(\t))) u_{\epsilon}'(\t) dx d\t+\lim_{\epsilon\rightarrow 0}\int_0^t \int_{\O} T_{\epsilon} (f(u(\t))) u_{\epsilon}'(\t) dx d\t<\infty,
 \end{align}
since we have shown that each limit on the right-hand side of (\ref{Gal-5}) converges to a finite value.

Applying the convergence of $u_{\epsilon}$ in $H^1_0(\O)$ and (\ref{Gal-5}), it follows from Fatou's Lemma that
 \begin{align} \label{Gal-3}
&\frac{1}{2}\int_0^t \int_0^{\infty} \|\nabla(u(\t)-u(\t-s))\|_{2}^2 (-\mu'(s)) ds d\tau \notag\\
&\leq \lim_{\epsilon\rightarrow 0}\frac{1}{2}\int_0^t \int_0^{\infty} \|\nabla(u_{\epsilon}(\t)-u_{\epsilon}(\t-s))\|_{2}^2 (-\mu'(s)) ds d\tau \notag\\
&\leq \frac{1}{2}\int_0^t \int_0^{\infty} \|\nabla(u(\t)-u(\t-s))\|_{2}^2 (-\mu'(s)) ds d\tau,
\end{align}
where the last inequality is due to the fact $\|\nabla(u_{\epsilon}(\t)-u_{\epsilon}(\t-s))\|_2 \leq \|\nabla(u(\t)-u(\t-s))\|_2$
by means of Proposition \ref{H1conv}. This implies that the inequalities in (\ref{Gal-3}) are actually equalities, which gives us the desired limit of (\ref{Gal-4}) as $\e\rightarrow 0$.
\end{proof}

\smallskip

\subsection{Continuous Dependence on Initial Data}
This subsection is devoted to the proof of  Theorem \ref{thm-cont}, which states that the solution of system (\ref{1.1}) depends  continuously on the initial data. The uniqueness of solutions then follows immediately.
\begin{proof}
Let $u^n_0$, $u_0\in L^2_{\mu}(\reals^-,H^1_0(\O))$, $n\in \naturals$, such that
$u_0^n\longrightarrow u_0$ in $L^2_{\mu}(\reals^-,H^1_0(\O))$ with
$u^n_0(0)\longrightarrow u_0(0)$ in $H^1_0(\O)$ and in $L^{\frac{3(p-1)}{2}}(\O)$, $\frac{d}{dt}u^n_0(0)\longrightarrow \frac{d}{dt}u_0(0)$ in $L^2(\O)$.
Let $\{u_n\}$ and $u$ be weak solutions on $[0,T]$ corresponding to the initial data
$\{u_0^n\}$ and $u_0$, respectively. It is important to note here that  the local existence time $T$ can be selected independent of $n$. To see this, recall (\ref{def-T}) and (\ref{bound-K}),  which imply that  $T$ depends on $K$, while $K$ depends on the initial energy.  Nonetheless, we can choose $K$ sufficiently large so that $K^2\geq 4(E_n(0)+1)$, for all $n$, and  $K^2\geq 4(E(0)+1)$, where $E_n(t)$ and $E(t)$ are quadratic energies corresponding to $u_n$ and $u$, respectively. Thus, $K$ and $T$ are both independent of $n$. Furthermore,  (\ref{bound}) and (\ref{bound-2}) yield
\begin{align} \label{bound-3}
\begin{cases}
E_n(t)\leq K^2/2, \quad E(t) \leq K^2/2, \text{\;\;for all\;\;} t \in [0,T];\\[.1in]
\ds \int_0^T \norm{u'_n}_{m+1}^{m+1}dt\leq C_K,\;\;\int_0^T \norm{u'}_{m+1}^{m+1}dt\leq C_K,
\end{cases}
\end{align}
for all $n\in \naturals$.

Now put:  $\tilde u_n=u_n-u$ and
\begin{align*}
\tilde E_n(t):=\frac{1}{2}\left(\norm{\tilde u_n'(t)}_2^2+\norm{\nabla \tilde u_n(t)}_2^2+\int_0^{\infty}\norm{\nabla \tilde w_n(t,s)}_2^2 \mu(s)ds \right),
\end{align*}
where $\tilde w_n(x,t,s)=\tilde u_n(x,t)-\tilde u_n(x,t-s)$.
By assumption, $\tilde E_n(0)\longrightarrow 0$. We aim to show $\tilde E_n(t) \longrightarrow 0$ uniformly on $[0,T]$.

Following the same approach in proving the energy identity (\ref{EI-0}), one can obtain that:
\begin{align*}
&\tilde  E_n(t)+\int_0^t \int_{\O}(g(u_n')-g(u'))(u_n'-u') dx d\t-\frac{1}{2}\int_0^t \int_0^{\infty} \norm{\nabla \tilde w_n}_2^2 \mu'(s)ds d\t \notag\\
&=\tilde E_n(0)+\int_0^t \int_{\O} (f(u_n)-f(u))(u_n'-u') dx d\t.
\end{align*}

By the monotonicity of the function $g$ and the assumption that $\mu'<0$, then the following  energy inequality holds:
\begin{align} \label{u-1}
\tilde  E_n(t) \leq \tilde E_n(0)+\int_0^t \int_{\O} (f(u_n)-f(u))\tilde u_n' dx d\t.
\end{align}

If $1\leq p\leq 3$, then the uniqueness of weak solutions can be obtained immediately by the energy inequality (\ref{u-1}). To see this, we recall, if $1\leq p\leq 3$, then $f: H^1_0(\O)\longrightarrow L^2(\O)$ is locally Lipschitz continuous, and along with (\ref{bound-3}), we infer
\begin{align*}
&\int_0^t \int_{\O}(f(u_n)-f(u))\tilde u_n' dx d\t  \notag\\
&\leq C(K)\left(\int_0^t \norm{\nabla \tilde u_n}_2^2 d\t\right)^{\frac{1}{2}}
\left(\int_0^t \norm{\tilde u_n'}_2^2 d\t\right)^{\frac{1}{2}}\leq C(K)\int_0^t \tilde E_n(\t)d\t.
\end{align*}
Thus, it follows from (\ref{u-1}) that
\begin{align*}
\tilde E_n(t)\leq \tilde E_n(0)+C(K)\int_0^t \tilde E_n(\t)d\t, \text{\;\;\;for all\;\;\;} t\in [0,T].
\end{align*}
By Gronwall's inequality, we have $$\tilde E(t)\leq C(K,T) \tilde E_n(0)$$ for all $t \in [0,T]$.  Since $\tilde E_n(0)\longrightarrow 0$, then $\tilde E_n(t)\longrightarrow 0$ uniformly on $[0,T]$, for the case $1\leq p\leq 3$.

However, if $3<p<6$, the estimate for the source term is  more subtle. Here, we follow a clever idea that has been used in \cite{BL2}. As in \cite{BL2}, we shall perform integration by parts \emph{twice} with respect to the time variable $t$, which essentially   convert $\tilde u_n'\in L^2(\O)$ to the more regular term $\tilde u_n\in H_0^1(\O)\hookrightarrow L^6(\O)$. More precisely, we compute as follows:
\begin{align} \label{u-2}
&\int_0^t \int_{\O} (f(u_n)-f(u))\tilde u_n' dx d\t \notag\\
&=\left[\int_{\O}(f(u_n)-f(u))\tilde u_n dx \right]_0^t
-\int_0^t \int_{\O}(f_1'(u_n)u_n'-f'(u)u')\tilde u_n dx d\t \notag\\
&=\left[\int_{\O}(f(u_n)-f(u))\tilde u_n dx \right]_0^t
-\int_0^t \int_{\O} (f'(u_n)-f'(u))u_n' \tilde u_n dx d\t \notag\\
&\hspace{1 in}-\int_0^t \int_{\O} f'(u)\tilde u_n' \tilde u_n dx d\t.\notag\\
&=\left[\int_{\O}(f(u_n)-f(u))\tilde u_n dx \right]_0^t
-\int_0^t \int_{\O} (f'(u_n)-f'(u))u_n' \tilde u_n dx d\t \notag\\
&\hspace{1 in}-\frac{1}{2}\left[\int_{\O}f'(u) |\tilde u_n|^2 dx\right]_0^t
+\frac{1}{2}\int_0^t \int_{\O} f''(u)u'|\tilde u_n|^2 dx d\t.
\end{align}
By the assumptions on $f$, we have
\begin{align} \label{u-3}
\begin{cases}
|f''(u)|\leq C(|u|^{p-2}+1)\\
|f'(u)|\leq C(|u|^{p-1}+1)\\
|f(u_n)-f(u)|\leq C(|u_n|^{p-1}+|u|^{p-1}+1)|\tilde u_n|\\
|f'(u_n)-f'(u)|\leq C(|u_n|^{p-2}+|u|^{p-2}+1)|\tilde u_n|.
\end{cases}
\end{align}
By using  (\ref{u-3}), then we estimate (\ref{u-2}) as follows:
\begin{align}  \label{u-4}
&\int_0^t \int_{\O} (f(u_n)-f(u))\tilde u_n' dx d\t \notag\\
&\leq \int_{\O}(|\tilde u_n(t)|^2+|\tilde u_n(0)|^2) dx
+\int_{\O}(|u_n(t)|^{p-1}+|u(t)|^{p-1})|\tilde u_n(t)|^2 dx \notag\\
&+\int_{\O}(|u_n(0)|^{p-1}+|u(0)|^{p-1})|\tilde u_n(0)|^2 dx
+\int_0^t \int_{\O}|\tilde u_n|^2 (|u_n'|+|u'|)dx d\t \notag\\
&+\int_0^t \int_{\O} (|u_n|^{p-2}+|u|^{p-2})|\tilde u_n|^2(|u_n'|+|u'|) dx d\t \notag\\
&:=I_1+I_2+I_3+I_4+I_5.
\end{align}

The next step is to estimate each term on the right hand side of (\ref{u-4}). First, let us look at
\begin{align} \label{u-5}
&I_1=\int_{\O}(|\tilde u_n(t)|^2+|\tilde u_n(0)|^2) dx
=\int_{\O}\left(\left|\tilde u_n(0)+\int_0^t \tilde u_n'(\t)d\t\right|^2+|\tilde u_n(0)|^2 \right) dx \notag\\
&\leq 3 \norm{\tilde u_n(0)}_2^2 + 2t \int_0^t \norm{\tilde u_n'(\t)}^2_2 d\t
\leq C \left(\tilde E_n(0)+ T \int_0^t \tilde E_n(\t) d\t\right).
\end{align}
Also, by H\"older's inequality and the imbedding $H^1(\O)\hookrightarrow L^6(\O)$, one has
\begin{align} \label{u-6}
I_3&=\int_{\O}(|u_n(0)|^{p-1}+|u(0)|^{p-1})|\tilde u_n(0)|^2 dx  \notag\\
& \leq \left(\norm{u_n(0)}^{p-1}_{\frac{3(p-1)}{2}}+\norm{u(0)}^{p-1}_{\frac{3(p-1)}{2}}\right)
\norm{\tilde u_n(0)}_6^2
\leq C\tilde E_n(0),
\end{align}
where we have used the fact $u_n(0)=u^n_0(0)\longrightarrow u(0)=u_0(0)$ in $L^{\frac{3(p-1)}{2}}(\O)$.

Similarly,
\begin{align} \label{u-7}
I_4&=\int_0^t \int_{\O}|\tilde u_n|^2 (|u_n'|+|u'|)dx d\t
\leq C\int_0^t \norm{\tilde u_n}_6^2(\norm{u_n'}_2+\norm{u'}_2)d\t  \notag\\
&\leq C\int_0^t \norm{\nabla \tilde u_n}_2^2(\norm{u_n'}_2+\norm{u'}_2)d\t
\leq C(K)\int_0^t \tilde E_n(\t) d\t.
\end{align}

To estimate $I_5$, we recall the assumption $p\frac{m+1}{m}<6$, which implies $\frac{6}{6-p}<m+1$. Hence,
\begin{align} \label{u-8}
I_5&=\int_0^t \int_{\O} (|u_n|^{p-2}+|u|^{p-2})|\tilde u_n|^2(|u_n'|+|u'|) dx d\t \notag\\
&\leq C\int_0^t \left(\norm{u_n}_6^{p-2}+\norm{u}_6^{p-2}\right)\norm{\tilde u_n}_6^2
\left(\norm{u_n'}_{\frac{6}{6-p}}+\norm{u'}_{\frac{6}{6-p}}\right)d\t \notag\\
&\leq C(K) \int_0^t \tilde E_n(\t) \left(\norm{u_n'}_{m+1}+\norm{u'}_{m+1}\right) d\t.
\end{align}

Finally, we estimate $I_2=\int_{\O}(|u_n(t)|^{p-1}+|u(t)|^{p-1})|\tilde u_n(t)|^2 dx$. For the sake of clarification, we focus on the term $\int_{\O} |u_n(t)|^{p-1} |\tilde u_n(t)|^2 dx$.
The estimate for $\int_{\O} |u(t)|^{p-1} |\tilde u_n(t)|^2 dx$ will be the same. There are two different cases to be considered.

{\bf \emph{Case 1: $3<p<5$.}} In this case, we split the integral to obtain
\begin{align} \label{u-9}
\int_{\O} |u_n(t)|^{p-1} |\tilde u_n(t)|^2 dx
\leq \int_{\O}|\tilde u_n(t)|^2 dx+\int_{\{x\in \O:\,\,  |u_n(t)|>1\}}|u_n(t)|^{p-1}|\tilde u_n(t)|^2 dx.
\end{align}
Note that the first term on the right hand side of (\ref{u-9}) has been estimated in (\ref{u-5}). So,  we only consider the second term. Let $\e\in (0,5-p)$, so if $|u_n|>1$,
then $|u_n|^{p-1}<|u_n|^{4-\e}$. It follows that
\begin{align} \label{u-10}
&\int_{\{x\in \O:|u(t)|>1\}}|u_n(t)|^{p-1}|\tilde u_n(t)|^2 dx
\leq \int_{\O}|u_n(t)|^{4-\e}|\tilde u_n(t)|^2 dx
\leq \norm{u_n(t)}_6^{4-\e}\norm{\tilde u_n(t)}^2_{\frac{6}{1+\e/2}}\notag\\
&\leq C\norm{\nabla u_n(t)}_2^{4-\e}\norm{\tilde u_n(t)}^2_{H^{1-\e/4}(\O)}
=C(K)(\e\norm{\nabla \tilde u_n(t)}_2^2+C_{\e}\norm{\tilde u_n(t)}_2^2)
\end{align}
where we have use the imbedding $H^{1-\delta}(\O)\hookrightarrow L^{\frac{6}{1+2\delta}}(\O)$ and the interpolation inequality. We infer from (\ref{u-5}), (\ref{u-9}) and (\ref{u-10}) that
\begin{align} \label{u-11}
\int_{\O} |u_n(t)|^{p-1} |\tilde u_n(t)|^2 dx
\leq C(K,\e)\left(\tilde E_n(0)+T \int_0^t \tilde E_n(\t)d\t\right)
+C(K)\e\tilde E_n(t).
\end{align}

{\bf \emph{Case 2: $5\leq p<6$.}} In this case, we require the initial data
$u^n_0(0)$, $u_0(0)\in L^{\frac{3(p-1)}{2}}(\O)$. Note, for any $\e>0$, there exists $\phi\in C_0(\O)$ such that
$\norm{u_0(0)-\phi}_{\frac{3(p-1)}{2}}<\e^{\frac{1}{p-1}}$. We consider
\begin{align} \label{u-12}
&\int_{\O}|u_n(t)|^{p-1}|\tilde u_n(t)|^2 dx  \notag\\
&\leq C\int_{\O}|u_n(t)-u^n_0(0)|^{p-1}|\tilde u_n(t)|^2 dx+C\int_{\O}|u^n_0(0)-u_0(0)|^{p-1}|\tilde u_n(t)|^2 dx \notag\\
&\hspace{0.5 in}+C\int_{\O}|u_0(0)-\phi|^{p-1}|\tilde u_n(t)|^2 dx
+C\int_{\O}|\phi|^{p-1}|\tilde u_n(t)|^2 dx.
\end{align}

By the assumption $p\frac{m+1}{m}<6$ and $5\leq p<6$, we infer $m>5$. In addition, we have $$\frac{3(p-1)}{2(m+1)}< \frac{3(5m-1)}{2(m+1)^2}< 1, $$
for $m>5$. Therefore, the first term on the right hand side of (\ref{u-12}) can be estimated as follows.
\begin{align} \label{u-13}
\int_{\O}|u_n(t)-u^n_0(0)|^{p-1}|\tilde u_n(t)|^2
&\leq \left(\int_{\O}|u_n(t)-u_n(0)|^{\frac{3(p-1)}{2}}\right)^{2/3}\norm{\tilde u_n(t)}_6^2  \notag\\
&\leq C\left(\int_{\O}\left| \int_0^t u_n'(\t) d\t \right|^{\frac{3(p-1)}{2}} dx \right)^{2/3} \norm{\tilde u_n(t)}^2_{1,\O} \notag\\
&\leq C\left[\int_{\O} \left(\int_0^t |u_n'|^{m+1} d\t \right)^{\frac{3(p-1)}{2(m+1)}} dx\right]^{2/3} T^{\frac{m(p-1)}{m+1}} \tilde E_n(t) \notag\\
& \leq C(K)T^{\frac{m(p-1)}{m+1}} \tilde E_n(t),
\end{align}
where we have used the bound $\int_0^T \norm{u_n'}_{m+1}^{m+1} dt\leq K$ and the fact that  $\frac{3(p-1)}{2(m+1)}<1$.

Next, we consider the second term on the right hand side of (\ref{u-12}).
\begin{align} \label{u-14'}
\int_{\O} &|u^n_0(0)-u_0(0)|^{p-1}|\tilde u_n(t)|^2dx
\leq \norm{u^n_0(0)-u_0(0)}_{\frac{3(p-1)}{2}}^{p-1} \norm{\tilde u_n(t)}_6^2 \leq \e \tilde E_n(t),
\end{align}
for $n$ sufficiently large, due to the assumption $u^n_0(0)\longrightarrow u_0(0)$ in $L^{\frac{3(p-1)}{2}}(\O)$.

Similarly, we have
\begin{align} \label{u-14}
\int_{\O} &|u_0(0)-\phi|^{p-1}|\tilde u_n(t)|^2dx
\leq \norm{u_0(0)-\phi}_{\frac{3(p-1)}{2}}^{p-1} \norm{\tilde u_n(t)}_6^2 \leq C\e \tilde E_n(t).
\end{align}

In addition, since $\phi \in C_0(\O)$, it is clear that $|\phi(x)|\leq C(\e)$, for all $x\in \O$. So, by (\ref{u-5}), we estimate the last term on the right hand side of (\ref{u-12}) as follows:
\begin{align} \label{u-15}
\int_{\O} |\phi|^{p-1}|\tilde u_n(t)|^2dx
\leq C(\e) \int_{\O} |\tilde u_n(t)|^2 dx
\leq C(\e)\left(\tilde E_n(0)+T\int_0^t \tilde E_n(\t) d\t\right).
\end{align}

It follows from (\ref{u-12})-(\ref{u-15}) that
\begin{align} \label{u-16}
&\int_{\O} |u_n(t)|^{p-1}|\tilde u_n(t)|^2 dx \notag\\
&\leq C(K)\left(T^{\frac{m(p-1)}{m+1}}+\e \right)\tilde E_n(t)
+C(\e)\left(\tilde E_n(0)+T\int_0^t \tilde E_n(\t)d\t \right),
\end{align}
in the case $5\leq p<6$.

By combining   (\ref{u-11})
and (\ref{u-16}) for the both cases,  we conclude that,
\begin{align} \label{u-17}
I_2 \leq C(K)\left(T^{\frac{m(p-1)}{m+1}}+\e \right)\tilde E_n(t)
+C(K,\e)\left(\tilde E_n(0)+T\int_0^t \tilde E_n(\t)d\t \right),
\end{align}
for any $3<p<6$.

Now, by  combining  (\ref{u-1}), (\ref{u-4})-(\ref{u-8}) and (\ref{u-17}), we have
\begin{align*}
\tilde E_n(t)
\leq  &C(K)\left(T^{\frac{m(p-1)}{m+1}}+\e \right)\tilde E_n(t)
+C(K,\e)\tilde E_n(0) \notag\\
&+C(K,T,\e)\int_0^t \tilde E_n(\t)\left( \norm{u_n'}_{m+1}+\norm{u'}_{m+1}+1 \right)  d\t,
\end{align*}
for all $t\in [0,T]$. By selecting $\e$ and $T$ sufficiently small so that $$C(K)\left(T^{\frac{m(p-1)}{m+1}}+\e \right) <1,$$
then by  Gronwall's inequality,
\begin{align*}
\tilde E_n(t)\leq C(K,T,\e)\tilde E_n(0)\exp \left(\int_0^t \left( \norm{u_n'}_{m+1}+\norm{u'}_{m+1}+1 \right)  d\t\right).
\end{align*}
Hence,
\begin{align*}
\tilde E_n(t)\leq C(K,T,\e)\tilde E_n(0),
\end{align*}
and since $\tilde E_n(0)\longrightarrow 0$, we conclude that $\tilde E_n(t)\longrightarrow 0$ uniformly on $[0,T]$.
\end{proof}

\bigskip

\section{Global existence}
In this section we prove Theorem \ref{thm-global} stating that a local weak solution $u$ on $[0,T]$ can be extended to $[0,\infty)$ provided the damping term $g(u_t)$ dominates the source $f(u)$, i.e., $m\geq p$.

\begin{proof}
One may employ a standard continuation argument from ODE theory to obtain that the weak solution $u$ is either global or there exists $0<T_{max}<\infty$ such that
\begin{align} \label{contr}
\limsup_{t\longrightarrow T_{max}^-}\mathcal E(t)=+\infty
\end{align}
where $\mathcal E(t)$ is the modified energy defined by
\begin{align}  \label{energy-modi}
\mathcal E(t)=E(t)+\frac{1}{p+1}\norm{u(t)}_{p+1}^{p+1}
\end{align}
and the quadratic energy $E(t)$ is as defined in (\ref{energy}). We aim to show the latter cannot happen if $m\geq p$.

Let $u$ be a weak solution to (\ref{1.1}) on $[0,T]$ in the sense of Definition \ref{def-weak}. With the modified energy $\mathcal E(t)$, the energy identity (\ref{EI-0}) now reads
\begin{align} \label{glob-1}
\mathcal E(t)& +\int_0^t \int_{\O} g(u_t)u_t dx d\t-\frac{1}{2}\int_0^t \int_0^{\infty} \norm{\nabla w}_2^2 \mu'(s) ds d\t \notag\\
&=\mathcal E(0)+\int_0^t \int_{\O} f(u)u_t dx d\t+\int_0^t \int_{\O}|u|^{p-1}u u_t dx d\t.
\end{align}
By the assumption $|f(u)|\leq C(|u|^p+1)$, we deduce
\begin{align} \label{glob-2}
&\left|\int_0^t \int_{\O}f(u)u_t dx d\t\right|\leq C\int_0^t \int_{\O} (|u|^p+1)|u_t| dx d\t \notag\\
&\leq \int_0^t \norm{u_t}_{p+1}\left(\norm{u}_{p+1}^p+|\O|^{\frac{p}{p+1}}\right) d\t \notag\\
&\leq \e\int_0^t \norm{u_t}_{p+1}^{p+1} d\t + C_{\e}\int_0^t \left(\norm{u}_{p+1}^{p+1}+|\O|\right)d\t \notag\\
&\leq \e \int_0^t \norm{u_t}_{p+1}^{p+1} d\t+C_{\e}\int_0^t \mathcal E(\t)d\t+C_{\e}T|\O|,
\end{align}
where $\e>0$ to be chosen later.
Similarly, we have
\begin{align} \label{glob-3}
\left|\int_0^t \int_{\O} |u|^{p-1} u u_t dx d\t\right|
\leq \e\int_0^t \norm{u_t}_{p+1}^{p+1} d\t +C_{\e} \int_0^t \mathcal E(\t) d\t.
\end{align}

It follows from (\ref{glob-2})-(\ref{glob-3})  and the assumptions $g(s)s\geq a|s|^{m+1}$, $\mu(s)\leq 0$, we infer from (\ref{glob-1}) that
\begin{align} \label{glob-4}
\mathcal E(t)+a\int_0^t \norm{u_t}_{m+1}^{m+1} d\t
&\leq \mathcal E(0)+\e\int_0^t \norm{u_t}_{p+1}^{p+1} d\t +C_{\e} \int_0^t \mathcal E(\t) d\t+C_{T,\e}\notag\\
&\leq \mathcal E(0)+\e\int_0^t \norm{u_t}_{m+1}^{m+1} d\t +C_{\e} \int_0^t \mathcal E(\t) d\t+C_{T,\e},
\end{align}
where we have used the assumption $m\geq p$,  H\"older's and Young's inequalities.

Now, if we choose $\e<a$, then (\ref{glob-4}) yields
\begin{align*}
\mathcal E(t)\leq \mathcal E(0)+C_{\e}\int_0^t \mathcal E(\t) d\t+ C_{T,\e}.
\end{align*}
By Gronwall's inequality, we conclude
\begin{align*}
\mathcal E(t)\leq (\mathcal E(0)+C_{T,\e})e^{C_{\e}T},
\end{align*}
for all $t\in [0,T]$. Thus, (\ref{contr}) cannot happen, which implies that $u$ is a global weak solution.
\end{proof}
\smallskip

\section{Appendix}
Here we provide some properties of the regularizing operator $(I-\epsilon \Delta)^{-1}$ that we used in the justification of the energy identity.
Let $u\in L^p(\O)$, $1<p<\infty$. Set $u_{\epsilon}:=(I-\epsilon \Delta)^{-1}u$ with $u_{\epsilon}=0$ on $\partial \Omega$.
\begin{proposition} \label{H1conv}
The following statements hold.
\begin{itemize}
\item If $u\in L^2(\O)$, then $\|u_{\epsilon}\|_2\leq \|u\|_2$, and $u_{\epsilon}\rightarrow u$ in $L^2(\Omega)$, as $\e \rightarrow 0$.
\item If $u\in H^1_0(\O)$, then $\|u_{\epsilon}\|_{H^1_0}\leq \|u\|_{H^1_0}$, and $u_{\epsilon}\rightarrow u$ in $H^1_0(\Omega)$, as $\e \rightarrow 0$.
\end{itemize}
\end{proposition}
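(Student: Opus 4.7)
My plan is to realize $u_\epsilon$ through a variational characterization: by the Lax--Milgram theorem applied to the coercive bilinear form $a_\epsilon(v,\varphi)=(v,\varphi)+\epsilon(\nabla v,\nabla\varphi)$ on $H^1_0(\O)$, for any $u\in L^2(\O)$ there is a unique $u_\epsilon\in H^1_0(\O)$ satisfying
\[
(u_\epsilon,\varphi)+\epsilon(\nabla u_\epsilon,\nabla\varphi)=(u,\varphi),\qquad \forall\,\varphi\in H^1_0(\O),
\]
which is the weak formulation of $u_\epsilon-\epsilon\Delta u_\epsilon=u$ with $u_\epsilon|_{\G}=0$.

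For the $L^2$ contraction bound I would test with $\varphi=u_\epsilon$ to obtain
\[
\|u_\epsilon\|_2^2+\epsilon\|\nabla u_\epsilon\|_2^2=(u,u_\epsilon)\leq\|u\|_2\|u_\epsilon\|_2,
\]
whence $\|u_\epsilon\|_2\leq\|u\|_2$. For the $L^2$-convergence I would first assume $u\in H^1_0(\O)$ (so that $u-u_\epsilon\in H^1_0(\O)$ is an admissible test function) and choose $\varphi=u-u_\epsilon$, producing
\[
\|u-u_\epsilon\|_2^2+\epsilon\|\nabla u_\epsilon\|_2^2=\epsilon(\nabla u_\epsilon,\nabla u).
\]
Cauchy--Schwarz followed by Young's inequality on the right-hand side gives
\[
\|u-u_\epsilon\|_2^2+\tfrac{\epsilon}{2}\|\nabla u_\epsilon\|_2^2\leq\tfrac{\epsilon}{2}\|\nabla u\|_2^2,
\]
which delivers both the $H^1_0$-contraction $\|\nabla u_\epsilon\|_2\leq\|\nabla u\|_2$ and the quantitative $L^2$-estimate $\|u-u_\epsilon\|_2\leq\sqrt{\epsilon/2}\,\|\nabla u\|_2\to 0$. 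To extend the $L^2$-convergence to arbitrary $u\in L^2(\O)$, I would use a standard density argument: given $\delta>0$, pick $v\in H^1_0(\O)$ with $\|u-v\|_2<\delta$; then linearity of the resolvent combined with the contraction property yields
\[
\|u_\epsilon-u\|_2\leq\|u_\epsilon-v_\epsilon\|_2+\|v_\epsilon-v\|_2+\|v-u\|_2\leq 2\delta+\|v_\epsilon-v\|_2,
\]
and since the last term tends to $0$ by the smooth case, $\limsup_{\epsilon\to 0}\|u_\epsilon-u\|_2\leq 2\delta$, proving the first bullet.

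For the second bullet, the $H^1_0$-bound $\|\nabla u_\epsilon\|_2\leq\|\nabla u\|_2$ is already at hand, so it remains to promote the $L^2$-convergence of $u_\epsilon\to u$ to $H^1_0$-convergence. The uniform $H^1_0$-bound extracts a weakly convergent subsequence $u_{\epsilon_k}\rightharpoonup \tilde u$ in $H^1_0(\O)$, whose $L^2$-limit must coincide with $u$ by the first bullet, forcing $\tilde u=u$; by uniqueness of the limit, the entire family satisfies $u_\epsilon\rightharpoonup u$ weakly in $H^1_0(\O)$. Weak lower semicontinuity of the norm combined with the contraction estimate yields
\[
\|\nabla u\|_2\leq\liminf_{\epsilon\to 0}\|\nabla u_\epsilon\|_2\leq\limsup_{\epsilon\to 0}\|\nabla u_\epsilon\|_2\leq\|\nabla u\|_2,
\]
so the norms converge; in a Hilbert space, weak convergence together with norm convergence implies strong convergence, giving $u_\epsilon\to u$ in $H^1_0(\O)$.

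I do not foresee any genuine obstacle: this is the standard picture for resolvents of the Dirichlet Laplacian on $L^2$, viewed as an $m$-accretive self-adjoint operator. The only mildly delicate point is the $H^1_0$-strong convergence, for which the weak-plus-norm trick above is the cleanest route; an alternative, equally direct proof would expand $u$ in an $L^2$-orthonormal basis of Dirichlet eigenfunctions $-\Delta e_k=\lambda_k e_k$, so that $u_\epsilon=\sum_k\frac{c_k}{1+\epsilon\lambda_k}e_k$, and then both bullets follow at once from the dominated convergence theorem applied to the series $\sum_k|c_k|^2\bigl(\frac{\epsilon\lambda_k}{1+\epsilon\lambda_k}\bigr)^2$ and $\sum_k\lambda_k|c_k|^2\bigl(\frac{\epsilon\lambda_k}{1+\epsilon\lambda_k}\bigr)^2$.
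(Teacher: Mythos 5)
Your argument is correct, but it follows a genuinely different route from the paper. The paper proves the proposition spectrally: it expands $u$ in the orthonormal basis $\{e_j\}$ of Dirichlet eigenfunctions of $-\Delta$, uses the explicit representation $u_{\epsilon}=\sum_j (1+\epsilon\lambda_j)^{-1}(u,e_j)e_j$, reads off the contraction bounds from $\lambda_j/(1+\epsilon\lambda_j)^2\leq\lambda_j$, and gets convergence by splitting the series into a finite part (controlled by $\epsilon^2$) and a small tail --- essentially the alternative you sketch in your closing remark. Your main proof instead characterizes $u_{\epsilon}$ via Lax--Milgram, tests the weak formulation with $u_{\epsilon}$ and with $u-u_{\epsilon}$ to get the $L^2$ and $H^1_0$ contractions together with the quantitative bound $\|u-u_{\epsilon}\|_2\leq\sqrt{\epsilon/2}\,\|\nabla u\|_2$, handles general $u\in L^2$ by density, and upgrades to strong $H^1_0$ convergence by the weak-convergence-plus-norm-convergence argument. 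What your route buys: it avoids the spectral theorem and compactness of the resolvent entirely, gives an explicit $O(\sqrt{\epsilon})$ rate for $u\in H^1_0$, and treats the $u\in L^2$ case for arbitrary data explicitly (the paper dismisses the first bullet as ``essentially the same''). What the paper's route buys: a single explicit formula from which both bullets and the monotone bounds $\|u_{\epsilon}\|\leq\|u\|$ drop out at once, with no need for the density step or the weak-limit identification. Both proofs are complete and rigorous; the only point worth making explicit in yours is that the weak limit is identified through the already-established $L^2$ convergence, which you do state, so there is no gap.
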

\begin{proof}
The proof of these two statements are essentially the same. We only consider the second one.
Since $\O$ is bounded with boundary of class $C^2$, it is well known
that $-\Delta$,  with  the domain $ H^2(\O)\cap H^1_0(\O)$, is positive, self-adjoint, and it is the inverse of a compact operator.
Moreover, $-\Delta$ has an infinite sequence of positive eigenvalues $0<\l_1\leq \l_2\leq \cdots \leq \l_j\leq \cdots$,
and a corresponding sequence of eigenfunctions $\{e_j: j=1,2,\cdots \}$ that forms an
orthonormal basis for $L^2(\O)$, i.e., $-\Delta e_j=\l_j e_j$ with $e_j=0$ on $\partial \O$. Also, the sequence $\{ e_j: j=1,2,\cdots\}$ is an orthogonal basis for $H^1_0(\O)$. In addition, the standard
norm $\norm {u}_{H^1_0(\O)}$ is equivalent to $ \left (\sum_{j =
1}^\infty \l_j\abs {(u,e_j)}^2\right )^{1/2}$. Therefore, we consider
\begin{equation*} \norm {u}_{H^1_0(\O)}^2= \sum_{j = 1}^\infty
\l_j\abs {(u,e_j)}^2.
\end{equation*}
We now have
\begin{align*}
\|u_{\epsilon}\|^2_{H^1_0(\O)}=\sum_{j=1}^{\infty} \frac{\l_j}{(1+\epsilon \lambda_j)^2} |(u,e_j)|^2 \leq \sum_{j=1}^{\infty} \l_j |(u,e_j)|^2 = \|u \|^2_{H^1_0}.
\end{align*}
To see that $u_{\epsilon}\rightarrow u$ in $H^1_0(\Omega)$, we calculate
\begin{align*}
\|u_{\epsilon}-u\|^2_{H^1_0(\O)}&=\sum_{j=1}^{\infty}\Big(\frac{\epsilon \l_j}{1+\e \l_j} \Big)^2 \l_j |(u,e_j)|^2 \notag\\
&=\sum_{j=1}^{N}\Big(\frac{\epsilon \l_j}{1+\e \l_j} \Big)^2 \l_j |(u,e_j)|^2
+\sum_{j=N+1}^{\infty}\Big(\frac{\epsilon \l_j}{1+\e \l_j} \Big)^2 \l_j |(u,e_j)|^2 \notag\\
&\leq \epsilon^2 \sum_{j=1}^N \l_j^3 |(u,e_j)|^2  +\sum_{j=N+1}^{\infty}  \l_j |(u,e_j)|^2.
\end{align*}
Since $u\in H^1_0(\O)$, then $\sum_{j=1}^{\infty}  \l_j |(u,e_j)|^2 <\infty$. Thus,  by choosing $N$ large enough and then selecting  $\epsilon$ small enough, the conclusion follows.
\end{proof}

\smallskip

\begin{proposition} \label{norm-conveg}
Let $u\in L^p(\Omega)$ with $1<p<\infty$, then $\|u_{\epsilon}\|_p \leq \|u\|_p$ and  $u_{\epsilon}\rightarrow u$ in $L^p(\O)$, as $\e \rightarrow 0$.
\end{proposition}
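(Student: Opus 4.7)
My plan for this proposition rests on two separate pillars: the $L^p$-contractivity of $T_\epsilon=(I-\epsilon\Delta)^{-1}$, and a density argument for the convergence. The contractivity is a classical fact about resolvents of $m$-accretive operators on $L^p$, but since the paper is self-contained I would derive it directly by testing the resolvent equation.

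\smallskip

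\textbf{Step 1: Contractivity.} By elliptic regularity (since $\partial\Omega\in C^2$), for $u\in L^p(\Omega)$ the solution $u_\epsilon\in W^{2,p}(\Omega)\cap W^{1,p}_0(\Omega)$ of $u_\epsilon-\epsilon\Delta u_\epsilon=u$ is well defined. I would multiply this equation by $|u_\epsilon|^{p-2}u_\epsilon$ and integrate over $\Omega$, obtaining
\begin{equation*}
\int_\Omega|u_\epsilon|^p\,dx+\epsilon(p-1)\int_\Omega|u_\epsilon|^{p-2}|\nabla u_\epsilon|^2\,dx=\int_\Omega u\,|u_\epsilon|^{p-2}u_\epsilon\,dx,
\end{equation*}
after integrating by parts (the boundary term vanishes because $u_\epsilon\in W^{1,p}_0$). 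Dropping the non-negative gradient term and applying H\"older's inequality on the right with exponents $p$ and $p/(p-1)$ yields $\|u_\epsilon\|_p^p\le \|u\|_p\|u_\epsilon\|_p^{p-1}$, i.e.\ $\|u_\epsilon\|_p\le \|u\|_p$.

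\smallskip

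\textbf{Step 2: Convergence.} For any test function $v\in C_c^\infty(\Omega)$, the identity $v_\epsilon-v=\epsilon(I-\epsilon\Delta)^{-1}\Delta v=\epsilon\,T_\epsilon(\Delta v)$ combined with Step~1 applied to $\Delta v\in L^p(\Omega)$ gives $\|v_\epsilon-v\|_p\le \epsilon\|\Delta v\|_p\to 0$. To upgrade this to an arbitrary $u\in L^p(\Omega)$, fix $\eta>0$, choose $v\in C_c^\infty(\Omega)$ with $\|u-v\|_p<\eta$, and estimate
\begin{equation*}
\|u_\epsilon-u\|_p\le \|T_\epsilon(u-v)\|_p+\|v_\epsilon-v\|_p+\|v-u\|_p\le 2\eta+\|v_\epsilon-v\|_p,
\end{equation*}
using the contractivity of $T_\epsilon$ on the first term. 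Letting $\epsilon\to 0$ and then $\eta\to 0$ yields $u_\epsilon\to u$ in $L^p(\Omega)$.

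\smallskip

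\textbf{Main obstacle.} The delicate point is the rigor of the integration by parts in Step~1 when $1<p<2$, because the test function $|u_\epsilon|^{p-2}u_\epsilon$ is not smooth at the zero set of $u_\epsilon$ and its formal derivative $(p-1)|u_\epsilon|^{p-2}\nabla u_\epsilon$ has a singular weight. I would handle this by working with the regularized test function $\phi_\delta=(u_\epsilon^2+\delta)^{(p-2)/2}u_\epsilon$, which is in $W^{1,p'}_0(\Omega)$ for $\delta>0$, performing the integration by parts legitimately, verifying that the regularized gradient term is non-negative, and finally passing $\delta\to 0^+$ via monotone/dominated convergence (the regularized version of the right-hand side converges to $\int u|u_\epsilon|^{p-2}u_\epsilon\,dx$ since $|u_\epsilon|^{p-1}u\in L^1(\Omega)$ by H\"older). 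For $p\ge 2$ no regularization is needed since $|s|^{p-2}s$ is $C^1$.
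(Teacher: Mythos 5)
Your overall strategy is close to the paper's for the contraction when $p\ge 2$: Step 1 is literally the paper's computation (multiply by $|u_\epsilon|^{p-2}u_\epsilon$, integrate by parts, drop the nonnegative gradient term, apply H\"older). You diverge in two places. For $1<p<2$ the paper does not regularize the test function at all; it exploits the symmetry of $T_\epsilon=(I-\epsilon\Delta)^{-1}$ and a duality argument,
$\|u_\epsilon\|_p=\sup_{\|\varphi\|_{p^*}=1}\int_\Omega u_\epsilon\varphi\,dx=\sup_{\|\varphi\|_{p^*}=1}\int_\Omega u\,\varphi_\epsilon\,dx\le\|u\|_p$,
which reduces the singular-exponent case to the already proved case $p^*>2$ and sidesteps negative powers of $u_\epsilon$ entirely. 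For the convergence, the paper argues by contradiction: the uniform bound together with the $L^2$-convergence from Proposition \ref{H1conv} yields weak $L^p$-convergence along a subsequence, and the norm convergence $\|u_{\epsilon}\|_p\to\|u\|_p$ upgrades weak to strong convergence (Radon--Riesz). Your density argument via the identity $v_\epsilon-v=\epsilon T_\epsilon(\Delta v)$ for $v\in C_c^\infty(\Omega)$, combined with the contraction, is a genuinely different and in fact more direct route: it treats all $1<p<\infty$ uniformly and does not lean on the $L^2$ theory at all. That part is correct as written.

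There is, however, a gap precisely at the point you flag as delicate. For $u\in L^p(\Omega)$ with $p$ close to $1$, elliptic theory only gives $u_\epsilon\in W^{2,p}(\Omega)$, and in three dimensions the best you get is $\nabla u_\epsilon\in L^{3p/(3-p)}(\Omega)$. Hence your claim that $\phi_\delta=(u_\epsilon^2+\delta)^{(p-2)/2}u_\epsilon\in W^{1,p'}_0(\Omega)$ (note $\nabla\phi_\delta$ is a bounded multiple of $\nabla u_\epsilon$ for fixed $\delta$) requires $\frac{3p}{3-p}\ge\frac{p}{p-1}$, i.e.\ $p\ge \frac32$; and even the integrability of $\nabla u_\epsilon\cdot\nabla\phi_\delta$, which is comparable to $|\nabla u_\epsilon|^2$, is not guaranteed unless $\frac{3p}{3-p}\ge 2$, i.e.\ $p\ge \frac65$. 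So for $1<p<\frac32$ the integration by parts in your Step 1 is not justified as stated. The gap is repairable: either prove the inequality first for $u\in L^2(\Omega)\cap L^p(\Omega)$ (then $u_\epsilon\in H^2\cap H^1_0$, $\phi_\delta\in H^1_0\cap L^\infty$, and the integration by parts is immediate), and pass to general $u\in L^p$ by density using the $\epsilon$-fixed bound $\|T_\epsilon\|_{L^p\to L^p}\le C_\epsilon$ from elliptic theory (or a.e.\ convergence plus Fatou); or simply adopt the paper's duality argument for $1<p<2$, which avoids the issue altogether.
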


\begin{proof}
By the definition of $u_{\epsilon}$, we have
\begin{align*}
\begin{cases}
u_{\epsilon}-\epsilon \Delta u_{\epsilon}=u \text{\;\;in\;\;} \Omega; \\
u_{\epsilon}=0 \text{\;\;on\;\;} \partial \Omega.
\end{cases}
\end{align*}
From the standard elliptic theory (see \cite{Gilbarg-Trudinger}), $u_{\e}\in W^{2,p}(\O)$ whenever $u\in L^p(\O)$, for $1<p<\infty$.

\emph{Case 1: $2\leq p <\infty$}.

Multiply the equation by $|u_{\epsilon}|^{p-2}u_{\epsilon}$ and integration by parts:
\begin{align*}
\|u_{\epsilon}\|_p^p+\epsilon \int_{\O} \nabla u_{\epsilon} \cdot \nabla (|u_{\epsilon}|^{p-2}u_{\epsilon})dx=\int_{\O} u |u_{\epsilon}|^{p-2}u_{\epsilon} dx.
\end{align*}
A straightforward calculation gives
\begin{align*}
\|u_{\epsilon}\|_p^p+\epsilon (p-1) \int_{\O} |u_{\epsilon}|^{p-2} |\nabla u_{\epsilon}|^2  dx=\int_{\O} u |u_{\epsilon}|^{p-2}u_{\epsilon} dx.
\end{align*}
Since the middle term is positive, one has
\begin{align*}
\|u_{\epsilon}\|_p^p \leq \int_{\O} |u| |u_{\epsilon}|^{p-1} dx \leq \|u\|_p \|u_{\epsilon}\|_p^{p-1}.
\end{align*}
It follows that
\begin{align} \label{moli-bound}
\|u_{\epsilon}\|_p \leq \|u\|_p, \text{\;\;for\;\;} 2\leq p<\infty.
\end{align}

To show $u_{\e}\rightarrow u$ in $L^p$, we argue by contradiction. Suppose not, then there exist $\delta>0$ and a subsequence $\{u_{\e_j}\}$ such that
\begin{align} \label{vilat}
\|u_{\e_j}-u\|_p\geq \delta, \text{\;\;for all\;\;} j=1,2,\cdots.
\end{align}
Due to the uniform bound (\ref{moli-bound}) and the fact $u_\epsilon \rightarrow u$ strongly in $L^2(\Omega)$ from Proposition \ref{H1conv}, we can extract a further subsequence $\{u_{\e_{j_k}}\}$ such that $u_{\e_{j_k}} \rightarrow u$ weakly in $L^p(\Omega)$. As a result, the weak convergence and the uniform bound (\ref{moli-bound}) imply
\begin{align*}
\|u\|_p\leq \liminf_{k \rightarrow 0} \|u_{\e_{j_k}}\|_p
\leq \limsup_{k \rightarrow 0} \|u_{\e_{j_k}}\|_p \leq \|u\|_p.
\end{align*}
This shows
\begin{align*}
\lim_{k \rightarrow 0} \|u_{\e_{j_k}}\|_p=\|u\|_p.
\end{align*}
Since we already know $u_{\e_{j_k}} \rightarrow u$ weakly in $L^p(\Omega)$, it follows that $u_{\e_{j_k}} \rightarrow u$ strongly in $L^p(\Omega)$, which violates (\ref{vilat}).

\smallskip

\emph{Case 2: $1< p <2$}.

In this case, the conjugate $2<p^*<\infty$. We calculate
\begin{align*}
\|u_{\epsilon}\|_p&=\sup_{\| \varphi \|_{p^*}=1}\int_{\Omega}u_{\epsilon}\varphi dx
=\sup_{\| \varphi \|_{p^*}=1}\int_{\Omega}u_{\epsilon}((I-\epsilon \Delta) \varphi_{\epsilon}) dx
=\sup_{\| \varphi \|_{p^*}=1}\int_{\Omega} ((I-\epsilon \Delta) u_{\epsilon}) \varphi_{\epsilon} dx  \notag\\
&=\sup_{\| \varphi \|_{p^*}=1}\int_{\Omega} u \varphi_{\epsilon} dx \leq \sup_{\| \varphi \|_{p^*}=1}
\|u\|_p \|\varphi_{\epsilon}\|_{p^*} \leq  \sup_{\| \varphi \|_{p^*}=1}
\|u\|_p \|\varphi\|_{p^*}=\|u\|_p,
\end{align*}
where the uniform bound (\ref{moli-bound}) has been used.

Finally, arguing as in Case 1, the convergence of $u_{\epsilon}$ in $L^p(\Omega)$ for $1<p<2$ follows.
\end{proof}

\smallskip

\noindent {\bf Acknowledgment.} The research of Y. Guo and E. S. Titi was supported in part by the NSF grants DMS--1009950, DMS--1109640, DMS--1109645, and also  by the Minerva Stiftung/Foundation. The research of  S. Sakuntasathien was supported by Faculty of Science, Silpakorn University. The research of  D. Toundykov was supported in part by the NSF grant DMS-1211232.

The authors are grateful to Jinkai Li for helpful discussions, and to the anonymous referee for many insightful comments and for suggesting substantial improvements to the original proofs.

\bibliographystyle{amsplain}

\end{document}